\DeclareRobustCommand{\examplesymbol}{%
  \ifmmode \tag*{$ \diamondsuit $}
  \else
    \leavevmode\unskip\penalty9999 \hbox{} \nobreak \hfill \hbox{$ \diamondsuit $}%
  \fi
}
\theoremstyle{plain}
\newtheorem{defn}{Definition}
\newtheorem{thm}[defn]{Theorem}
\newtheorem{lem}[defn]{Lemma}
\newtheorem{cor}[defn]{Corollary}
\theoremstyle{definition}
\newtheorem{rem}[defn]{Remark}
\newcommand{\calB}{\mathcal{B}}
\newcommand{\B}{\mathcal{B}}
\newcommand{\ddu}{\ddot{u}}
\newcommand{\du}{\dot{u}}
\newcommand{\dv}{\dot{v}}
\newcommand{\ddv}{\ddot{v}}
\newcommand{\bfu}{\mathbf{u}}
\newcommand{\bfdu}{\mathbf{\du}}
\newcommand{\bfddu}{\mathbf{\ddu}}
\newcommand{\bfv}{{v}}
\newcommand{\bfdv}{{\dv}}
\newcommand{\bfddv}{{\ddv}}
\newcommand{\bfw}{{w}}
\newcommand{\dphi}{{\dot{\phi}}}
\newcommand{\R}{\mathbb{R}}
\newcommand{\N}{\mathbb{N}}
\newcommand{\HB}[1]{H^{#1}_{\mathcal{B}}(\Omega)}
\DeclareMathOperator*{\esssup}{ess\,sup}
\title{Existence, Uniqueness and Regularity of Piezoelectric Partial Differential Equations}
\author{Benjamin Jurgelucks$^1$, Veronika Schulze$^1$, Tom Lahmer$^2$}
\date{%
    $^1$Paderborn University, %
    $^2$Bauhaus-University Weimar%
}
\begin{document}

\maketitle

\section*{Abstract}
Piezoelectric appliances have become hugely important in the past century and computer simulations play an essential part in the modern design process thereof.
While much work has been invested into the practical simulation of piezoelectric ceramics there still remain open questions regarding the partial differential equations governing the 
piezoceramics.

The piezoelectric behavior of many piezoceramics can be described by a second order coupled partial differential equation system. 
This consists of an equation of motion for the mechanical displacement in three dimensions and a coupled electrostatic equation for the electric potential.
Furthermore, an additional Rayleigh damping approach makes sure that a more realistic model is considered.

In this work we analyze existence, uniqueness and regularity of solutions to theses equations and give a result concerning the long-term behavior.
The well-posedness of the initial boundary value problem in a bounded domain with sufficiently smooth boundary is proved by Galerkin approximation 
in the discretized weak version, followed by an energy estimation using Gronwall inequality and using the weak limit to show the results in the infinite dimensional space.
Initial conditions are given for the mechanical displacement and the velocity.

\section{Introduction}
Piezoelectricity has become more and more important for technical purposes and innovations especially when high-frequency vibrations are to be measured or produced. 
Typical applications as actuators range from piezo-igniters over ultrasonic toothbrushes to diesel fuel injectors as well as many others, e.g., as part of intelligent sensory equipment.
The piezoelectric effect describes the transaction between electrical and mechanical energy changes of a piezoelectric sample. The effect is caused by the structure of the material and its polarization. Therefor it is clear, that the effect and its usage is material based (cf.~\cite{heywang}).\\
There are two problems which can be solved regarding the piezoelectric equations, the forward and the inverse problem. For details regarding the inverse problem and optimization of sensitivities see e.g.~\cite{feldmann},~\cite{jurgelucks}.\\
In order to design and analyse new piezoelectric devices, models are employed \cite{NanthakumarLahmer}. However for a reliable use existence, uniqueness and regularity of the solutions for these models need to be guaranteed.\\

The underlying application of the well-posedness result is a piezoelectric ceramic disc with top and bottom surface electrodes. The material parameters are extracted from real measurements for the forward simulation to compute the mechanical displacement and the electrical potential after electrical excitation.

The proof of the properties mentioned above assumes a bounded domain with sufficiently smooth boundary. Our piezoceramic and the electrodes on top and bottom fulfill these requirements.

The underlying model is linear, includes Rayleigh damping and neglects thermal effects. The behavior of the piezoelectric material can be described by a second order partial differential equation system, which defines the mechanical displacement and the electrical potential.
By an appropriate choice of the Rayleigh damping parameters, the equation of motion of the mechanical displacement is a hyperbolic partial differential equation and the electrostatic equation of the electrical potential is an elliptic partial differential equation. 
The density, the elastic stiffness, the dielectric permittivity and the piezoelectric coupling matrices are the given material components in the standard Voigt notation. 
There are several existing works on the well-posedness of the piezoelectric initial boundary problem usually without any damping models. The proof structures used in this paper are similar. Parts of our work are based on the proof presented in \cite{lahmer}. Technical details are however elaborated in more detail and some derivations are developed in a more rigorous way. 
Proofs for the static and harmonic case can be found in \cite{kaltenbacher_lahmer_mohr_pde_based} and \cite{lahmer}.

The proof is divided in four general steps. First, the system is transformed into the weak form and discretized, via Galerkin approximation. Then, via standard theory for ordinary differential equations there exist unique solutions. The finiteness of the finite dimensional solution is shown by the energy estimates via the Gronwall inequality. The weak limit of the discretized solutions provide the weak existence of a solution in infinite dimensional function spaces. The uniqueness of the solution is shown by applying the estimates to the homogeneous system and getting the trivial solution.

In the second part of the paper, Theorem \ref{theorem2} studies higher regularities for the solution of the system based on higher regularity requirements for the initial condition of the mechanical displacement, the velocity and the boundary value for the electrical potential.
Finally, a remark about the long-term behaviour of an energy functional considered in the proof of Theorem \ref{theorem1} is stated.

\section{Setting}

Before we can begin to solve any partial differential equation we must first establish an exact setup - the geometry $\Omega$, the boundary $\partial \Omega$, the boundary conditions and initial values of the partial differential equations in question. 
We consider the case of a mechanically unclamped piezoceramic which is excited by prescribing a voltage on a part of the boundary.
Let $\Omega\subseteq \R^3$ be an open domain describing the piezoelectric ceramic and let $\partial \Omega=:\Gamma$ be the nonempty boundary of $\Omega$.
The boundary is divided into nonempty, disjunct, covering subsets of $\Gamma$ (see also Fig.~\ref{fig:boundary}) which are assumed to have a positive 2D measure.
Let $\Gamma_e$ be the section of the boundary which is electrically excited, $\Gamma_g$ the section of the boundary which is grounded, $\Gamma_r=\Gamma \setminus \left(\Gamma_e \cup \Gamma_g \right)$ the remaining boundary section.
 \begin{figure}[!ht]
 \begin{center}
\includegraphics[width=0.9\textwidth]{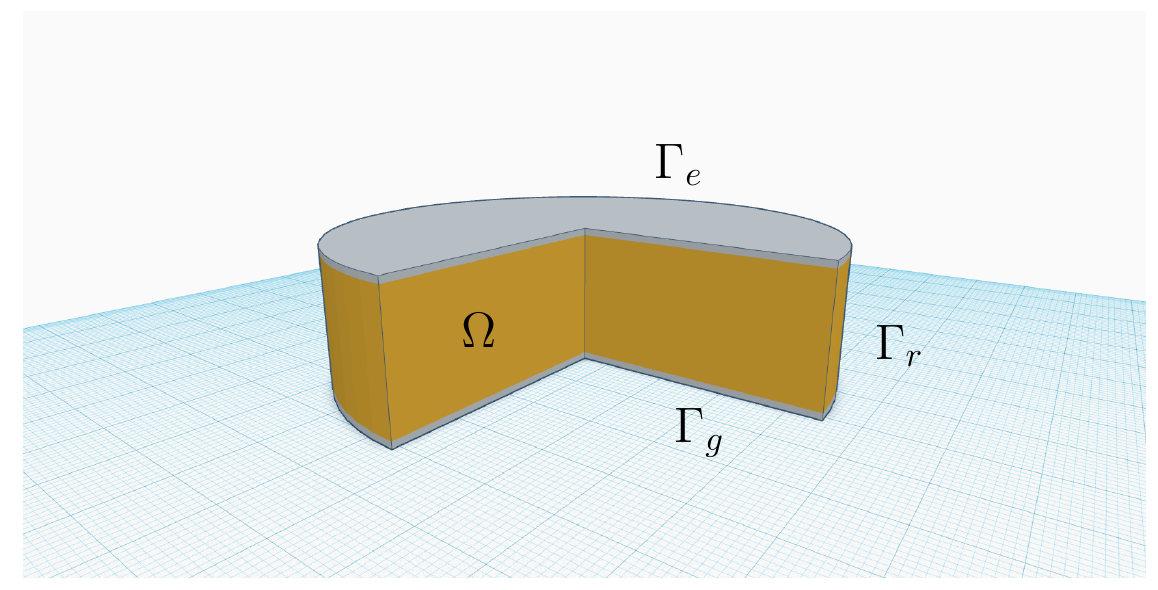}
\caption{Domain and boundaries of a piezoceramic. }
\label{fig:boundary}
\end{center}
\end{figure}

For the readers convenience the usual definitions of common function spaces which will be required later on are stated in the appendix \ref{appendix_definition}.
Only the newly defined function spaces for the considered differential equation system are described now:
\begin{equation*}
\begin{aligned}
H^1_{0,\Gamma}(\Omega)&:=  \Big\{\ \sigma_1+\sigma_2 : \sigma_1 \in H^1_0(\Omega) \text{ and } \sigma_2 \in H^1(\Omega) \Big\}\ ,\\
H^1_\B(\Omega)&:=\Big\{\ \sigma: \Omega \rightarrow \R^3 :  \|\sigma\|_{H^1_\B(\Omega)}^2:=\|\sigma\|^2_{L^2(\Omega)}+\|\B \sigma\|^2_{L^2(\Omega)}<\infty \Big\},\
\end{aligned}
\end{equation*}

where $$
	\B:= 
\begin{pmatrix}
\frac{\partial}{\partial x} & 0 & 0\\
 0 &\frac{\partial}{\partial y}  & 0\\
 0 & 0 &\frac{\partial}{\partial z}\\
0 & \frac{\partial}{\partial z} & \frac{\partial}{\partial y}\\
\frac{\partial}{\partial z} & 0 & \frac{\partial}{\partial x}\\
\frac{\partial}{\partial y} & \frac{\partial}{\partial x} & 0\\
\end{pmatrix}
$$
with $x,y,z$ refering to Cartesian coordinates. 
In this paper we denote derivatives with respect to time by the dot symbol e.g. $ \dot{ \sigma} $ and derivatives with respect to space by the nabla or $\B$ symbol, e.g. $\nabla \sigma$ or $\B \sigma$. Here $\B$ denotes the symmetric gradient in Voigt notation. It should be noted that the last three entries of the matrix vector product $\B u$ still contains the factor $2$, but for simplicity, no attention is paid here. The factor can be included in the definition of the linear strain vector $S$, where $S=\B u$.\\
All derivatives in the above are understood in the distributional sense.
In addition, the dual space of a Hilbert space $X$ is denoted by $X'$.
In particular, $H^{-1}(\Omega)$ denotes the dual space of $H^1_0(\Omega)$. 
Note that in order to simplify the notation superscripts indicating the dimension of $\bfu$ or $\B \bfu$, which are 3 and 6 respectively, are omitted.
This is reasonable as the vectorial scalar product inside $\int_{\Omega} \sigma^T\sigma \, d\Omega$ always returns a scalar no matter what dimensions $\sigma$ has.

Let $\vec{n}:=(n_x, \, n_y,\, n_z)$ be the normal vector and 
$$
	\mathcal{N}:= 
\begin{pmatrix}
n_x & 0 & 0\\
 0 &n_y  & 0\\
 0 & 0 &n_z\\
0 & n_z & n_y\\
n_z & 0 & n_x\\
n_y & n_x & 0\\	
\end{pmatrix}.
$$

\begin{defn}
The material parameters $c^E, \epsilon^S$ and $e$ $([c^E]=N\cdot m^{-2},\ [\epsilon^S]=F\cdot m^{-1},\ [e]=C\cdot m^{-2})$ are given by
\begin{eqnarray*}
c^E:=
\left( \begin{array}{cccccc}
c_{11} & c_{12} & c_{13}&0&0&0\\
c_{12} & c_{11} & c_{13}&0&0&0 \\
c_{13} & c_{13} & c_{33} &0&0&0 \\
0&0&0&c_{44}&0&0\\
0&0&0&0&c_{44}&0\\
0&0&0&0&0&\frac{1}{2}(c_{11}-c_{12})\\
\end{array} \right)\in \R^{6 \times 6}
\end{eqnarray*}
\begin{eqnarray*}
\epsilon^S:=
\left( \begin{array}{ccc}
\epsilon_{11} &0 & 0\\
0 & \epsilon_{11} & 0 \\
0 & 0 & \epsilon_{33} 
\end{array} \right)\in \R^{3 \times 3}
\end{eqnarray*}
\begin{eqnarray*}
e:=
\left( \begin{array}{cccccc}
0 &0 & 0& 0& e_{15}& 0\\
0 &0 & 0& e_{15}& 0& 0\\
e_{13} &e_{13} & e_{33}& 0& 0& 0\\
\end{array} \right)\in \R^{6 \times 3}.
\end{eqnarray*}
The material parameters are said to fulfill nonnegativity conditions if $c^E$ and $\epsilon^S$ are positive definite matrices.
\label{nonnegativitiyconditions}
\end{defn}

The three dimensional transient linear piezoelectric equations with Rayleigh damping parameters $\alpha,\beta>0$ (chosen sufficiently large enough so that the system is parabolic) and density $\rho >0$ describing the mechanical displacement $\bfu \in \R^3$ and the electrical potential $\phi \in \R$
with given boundary conditions are stated as: 
\begin{eqnarray*}
     \rho \bfddu(t) + \alpha \rho \bfdu(t)-\calB^T \left( c^E \calB \bfu(t) + \beta c^E \calB \bfdu(t) + e^T \nabla\phi(t)\right)  & = & 0 \text{  in } \Omega \times [0,T],\\
    -\nabla \cdot \left(e \calB \bfu(t) -\epsilon^S \nabla \phi(t) \right) & = & 0 \text{  in } \Omega\times [0,T], \\
		    \phi(t) & = & 0 \text{  on } \Gamma_g\times [0,T], \\
				\phi(t) & = & \phi^e(t) \text{  on }\Gamma_e\times [0,T], \\
				\vec{n} \cdot  \left(e \calB \bfu(t) -\epsilon^S \nabla \phi(t) \right) & = & 0 \text{  on } \Gamma_r \times [0,T],\\
				\mathcal{N}^T   \left( c^E \calB \bfu(t) + \beta c^E \calB \bfdu(t) + e^T \nabla\phi(t)\right) & = & 0 \text{  on } \partial\Omega\times [0,T], \\
				\bfu(0)& = & \bfu_0, \\
				\bfdu(0)& = & \bfu_1.
\end{eqnarray*}

The weak form of the equations above can easily be obtained \cite{lahmer} by testing with appropriate functions $v\in\R^3$ (for the first line) and $w\in\R$ (for the second line), integration by parts and using
boundary conditions:
$$
\int_\Omega \left(\B^T \sigma \right)^Tv \,d\Omega=-\int_{\Omega} \sigma^T \B v \,d\Omega + \int_{\partial \Omega} \left(\mathcal{N}^T \sigma \right)^T v \,d\Omega.
$$

First, we use a \emph{Dirchlet lift ansatz} to homogenize the Dirichlet boundary condition for $\phi(t)$:
Let $t\in [0,T]$ and let $\chi \in H^1(\Omega)$ where $\chi\vert_{\Gamma_g}=0$ and $\chi\vert_{\Gamma_e}=1 $. 
Such a $\chi$ exists if we assume that $\Omega$ is at least a Lipschitz domain. Let $\phi(t)$ consist of two parts $\phi(t)=\phi_0(t) + \phi_{\phi^e}(t)$ where $\phi_0(t)\in H^1_0(\Omega)$ and $ \phi_{\phi^e}(t)\in H^1(\Omega)$.
We then rewrite $\phi_{\phi^e}(t)=\phi^e(t)\chi$. Therefor we set $\phi_0(t):=\phi(t)-\phi^e(t)\chi$.

As $\phi^e(t)$ is a given value $\phi^e(t)\chi$ can be taken out of the left hand side of the weak form and added to the right hand side. 
The weak form of the piezoelectric system  for all $ t \in [0,T] \text{ a.e.}$ and  for all test functions $(v,w) \in \HB{1} \times H^1_0(\Omega)$ is given by

\begin{equation}
\begin{aligned}
&\int_\Omega \rho \bfddu^T v \,d\Omega +\alpha \int_{\Omega} \rho \bfdu^Tv \,d\Omega +\int_\Omega \left(c^E \calB \bfu\right)^T\calB v \,d\Omega   + \beta\int_\Omega \left(c^E \calB \bfdu\right)^T\calB v \,d\Omega  \\
+& \int_\Omega \left( e^T\nabla\phi_0\right)^T \calB v \,d\Omega + \int_\Omega \left( e \calB \bfu\right)^T\nabla w \,d\Omega -  \int_\Omega \left(\epsilon^S \nabla \phi_0\right)^T\nabla w \,d\Omega \\
= &\, \phi^e \int_{\Omega} -(e^T \nabla \chi)^T\calB v  +  (\epsilon^S \nabla \chi)^T \nabla w \,d\Omega.
\label{eq_schwache_form_erstes_auftreten}
\end{aligned}
\end{equation}

Note that in light of \cite[Thm. 2 in section 5.9.2]{evans} it makes sense to demand $\bfu(0)=\bfu_0$ and $\bfdu(0)=\bfu_1$. See also the only remark in \cite[section 7.2.1]{evans}.

\section{Existence, uniqueness and regularity of solutions}
Before we attempt to show existence, uniqueness and regularity of solutions some additional tools are required:

\begin{lem}(Young inequality)\\
Let $1<p, q< \infty, \frac{1}{p}+\frac{1}{q}=1$. 
Then for $a,b>0$ the following inequality holds:
$$
ab \leq \frac{a^p}{p}+ \frac{b^q}{q}.
$$
\end{lem}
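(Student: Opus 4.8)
The plan is to obtain Young's inequality as a direct consequence of the concavity of the natural logarithm on $(0,\infty)$. Since $a,b>0$, both $a^p$ and $b^q$ are strictly positive, so no degenerate cases need separate treatment and the logarithm may be applied freely throughout.

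First I would record the relevant convexity fact. Because $\ln$ is concave and the weights $\tfrac1p,\tfrac1q$ are nonnegative and sum to one by hypothesis, Jensen's inequality applied to the two points $a^p$ and $b^q$ gives
\begin{equation*}
\ln\!\left(\frac{a^p}{p}+\frac{b^q}{q}\right)\ \geq\ \frac{1}{p}\ln\!\left(a^p\right)+\frac{1}{q}\ln\!\left(b^q\right).
\end{equation*}
The next step is to simplify the right-hand side: using $\ln(a^p)=p\ln a$ and $\ln(b^q)=q\ln b$, the factors of $p$ and $q$ cancel and the right-hand side collapses to $\ln a+\ln b=\ln(ab)$. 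Finally, since $\ln$ is strictly increasing, I would apply its monotonicity (equivalently, exponentiate both sides) to conclude $\frac{a^p}{p}+\frac{b^q}{q}\geq ab$, which is the claim.

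An equivalent route, which avoids invoking Jensen explicitly, is to use convexity of $\exp$ directly: writing $a=e^{s/p}$ and $b=e^{t/q}$ with $s=p\ln a$ and $t=q\ln b$, convexity of the exponential with weights $\tfrac1p,\tfrac1q$ yields $ab=e^{s/p+t/q}\leq \tfrac1p\,e^{s}+\tfrac1q\,e^{t}=\frac{a^p}{p}+\frac{b^q}{q}$. A third, entirely elementary option is to fix $b$ and minimize $f(a)=\frac{a^p}{p}+\frac{b^q}{q}-ab$ over $a>0$; the stationarity condition $a^{p-1}=b$ together with the relation $q=\frac{p}{p-1}$ produces a unique critical point at which $f$ vanishes, while $f''(a)=(p-1)a^{p-2}>0$ confirms it is the global minimum.

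The proof carries no genuine analytic obstacle; the only point requiring care is the algebraic bookkeeping with the conjugate exponents. Specifically, I expect the most error-prone—though entirely routine—step to be verifying that the identity $\tfrac1p+\tfrac1q=1$ is exactly what makes the Jensen weights sum to one and, equivalently, what forces the exponents in the $\exp$-route to combine into $ab$ rather than some other product of powers.
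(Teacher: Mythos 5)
Your proof is correct; the paper itself offers no argument, only a citation to Evans (Appendix B.2), and the convexity-of-$\exp$ route you describe is exactly the proof given in that reference, so your approach coincides with the paper's intended one. The Jensen/logarithm phrasing and the calculus minimization are standard equivalent variants and both go through without issue.
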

\begin{proof}
See \cite[Appendix B.2]{evans}.
\end{proof}

\begin{lem}(H\"older inequality) \\
Let $1\leq p, q\leq \infty, \frac{1}{p}+\frac{1}{q}=1$. 
Then for $u\in L^p(\Omega),v\in L^q(\Omega) $ the following inequality holds:
$$
 \int_\Omega |uv| \, dx \leq \|u\|_{L^p(\Omega)} \|v\|_{L^q(\Omega)}.
$$   
\end{lem}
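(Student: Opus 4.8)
The plan is to deduce the integral inequality from the pointwise Young inequality stated immediately above, after a suitable normalization. First I would dispose of the degenerate and boundary cases. If $p=1$ and $q=\infty$ (or vice versa), then $|u(x)v(x)| \le \|v\|_{L^\infty(\Omega)}\,|u(x)|$ for almost every $x$, and integrating over $\Omega$ gives the claim directly from the definition of the essential supremum, so no appeal to Young is needed. If $1<p,q<\infty$ but $\|u\|_{L^p(\Omega)}=0$ or $\|v\|_{L^q(\Omega)}=0$, then $u=0$ or $v=0$ almost everywhere, hence the left-hand side vanishes and the inequality is trivial; likewise, if either norm is infinite the right-hand side is $+\infty$ and there is nothing to prove.

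In the remaining main case $1<p,q<\infty$ with both norms finite and strictly positive, I would normalize by setting
$$
a(x):=\frac{|u(x)|}{\|u\|_{L^p(\Omega)}}, \qquad b(x):=\frac{|v(x)|}{\|v\|_{L^q(\Omega)}}.
$$
For almost every $x$ with $a(x),b(x)>0$, Young's inequality yields $a(x)b(x)\le \frac{a(x)^p}{p}+\frac{b(x)^q}{q}$, while at points where $a(x)=0$ or $b(x)=0$ the product $a(x)b(x)$ vanishes and the same bound holds trivially, so the pointwise estimate is valid almost everywhere on $\Omega$. Integrating over $\Omega$ and using $\int_\Omega a^p\,dx = \int_\Omega b^q\,dx = 1$ gives
$$
\frac{1}{\|u\|_{L^p(\Omega)}\,\|v\|_{L^q(\Omega)}}\int_\Omega |uv|\,dx \;\le\; \frac{1}{p}+\frac{1}{q} = 1 .
$$
Multiplying through by the product of the two norms then yields the asserted inequality.

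The only genuinely delicate point is bookkeeping rather than mathematics: Young's inequality as stated is formulated for strictly positive $a,b$, so I must separately note that the estimate extends to the set where one factor vanishes, and I must keep the measurability and finiteness conditions in order so that every integral appearing is well defined. Since $|uv|$ is measurable and, once both norms are finite, dominated by an integrable majorant, these technicalities cause no real difficulty, and the boundary exponents $p=1,\infty$ are handled by the direct argument above rather than through Young's inequality.
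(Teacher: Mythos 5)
Your proof is correct: the paper itself offers no argument but simply cites Evans, Appendix B.2, and the proof given there is exactly your normalization-plus-Young argument, so you have reproduced the standard route rather than diverged from it. Your handling of the boundary exponents $p=1,\,q=\infty$ and of the degenerate cases where a norm vanishes or is infinite is careful and complete.
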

\begin{proof}
See \cite[Appendix B.2]{evans}.
\end{proof}

\begin{rem}
The latter two inequalities are especially true for $p=q=2$.
The latter inequality is then known as Cauchy--Schwarz (C.S.) inequality.
\end{rem} 

\begin{lem}(Gronwall inequality, integral form) \\
 \begin{enumerate}
 \item[a)] 
Let $\eta:[0,T]\rightarrow \R^{\ge 0}$ be a nonnegative, summable function on $[0,T]$, which satisfies for
almost every $t$ the differential inequality
$$
\eta(t) \leq C_1 \int_0^t \eta(s) \, ds +C_2
$$
for constants $C_1,C_2 \geq 0$. Then
$$
\eta(t) \leq C_2\left( 1+C_1t e^{C_1 t}    \right)
$$
for a.e. $0 \leq t \leq T$.
\item[b)]
In particular, if 
$$
\eta(t) \leq C_1 \int_0^t \eta(s) ds
$$
for a.e. $t\in [0,T]$, then
$$
\eta(t)=0 \text{ a.e. }
$$
\end{enumerate}
\end{lem}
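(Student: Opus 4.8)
The plan is to prove part a) by reducing the integral inequality to a differential inequality for the antiderivative of $\eta$ and then applying an integrating factor; part b) will follow instantly as the special case $C_2=0$. The two inequalities just recalled (Young, H\"older) are not needed here.

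For part a), I would first set $\Phi(t) := \int_0^t \eta(s)\,ds$. Since $\eta$ is summable, $\Phi$ is absolutely continuous on $[0,T]$ with $\Phi(0)=0$ and $\Phi'(t)=\eta(t)$ for almost every $t$, by the fundamental theorem of Lebesgue integral calculus. The hypothesis then reads $\Phi'(t)\leq C_1\Phi(t)+C_2$ a.e. Multiplying by the integrating factor $e^{-C_1 t}$ gives, for a.e.\ $t$,
$$\frac{d}{dt}\Big(e^{-C_1 t}\Phi(t)\Big)=e^{-C_1 t}\big(\Phi'(t)-C_1\Phi(t)\big)\leq C_2\,e^{-C_1 t}\leq C_2.$$
The key step is then to integrate this pointwise-a.e.\ derivative inequality back up to an inequality for the function itself: since $t\mapsto e^{-C_1 t}\Phi(t)$ is a product of absolutely continuous functions on a compact interval, it is itself absolutely continuous and hence equals the integral of its a.e.-defined derivative. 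Integrating from $0$ to $t$ and using $\Phi(0)=0$ yields $e^{-C_1 t}\Phi(t)\leq C_2 t$, that is $\Phi(t)\leq C_2 t\,e^{C_1 t}$. Substituting back into the original inequality gives
$$\eta(t)\leq C_1\Phi(t)+C_2\leq C_1C_2\,t\,e^{C_1 t}+C_2=C_2\big(1+C_1 t\,e^{C_1 t}\big),$$
which is the asserted bound; the degenerate case $C_1=0$ is covered directly, the integrating factor reducing to $1$. For part b), I would apply part a) with $C_2=0$, so that the bound collapses to $\eta(t)\leq 0$ a.e.; combined with the assumed nonnegativity of $\eta$ this forces $\eta(t)=0$ a.e.

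I expect the only genuine subtlety to be the regularity bookkeeping rather than any computation: one must verify that $\Phi$ is absolutely continuous and that the differential inequality, which holds only almost everywhere, may legitimately be integrated. This is exactly where summability of $\eta$ is used, and where one must invoke absolute continuity of $e^{-C_1 t}\Phi(t)$ rather than mere a.e.\ differentiability, since an a.e.\ bound on a derivative integrates to a bound on the function only for absolutely continuous functions. Everything else is elementary.
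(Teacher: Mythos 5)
Your proof is correct, and the paper itself offers no argument here --- it simply cites Evans, Appendix~B.2, where essentially this same integrating-factor proof appears. Your regularity bookkeeping (absolute continuity of $\Phi$ and of $e^{-C_1 t}\Phi(t)$, so that the a.e.\ derivative bound integrates up) is exactly the right point to be careful about, and the deduction of b) from a) with $C_2=0$ plus nonnegativity is fine.
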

\begin{proof}
See \cite[Appendix B.2]{evans}.
\end{proof}

\begin{rem}(Sufficiently smooth boundary) \\
We say the boundary $\partial \Omega$ is sufficiently smooth if it permits application of the trace theorem (cf. \cite{evans}).

Thus, a $C^1-$boundary is sufficient. However, it is possible to utilize a variation of the trace theorem under less strict requirements (cf. \cite{ding_trace}).
We note that the boundary for our specific application (see Fig.~\ref{fig:boundary}) satisfies the special Lipschitz condition stated in Definition 5 of \cite{ding_trace}
and thus it appears that it can also be considered sufficiently smooth.
\label{def_smooth_bound}
\end{rem}

A proof for the following theorem was first given in \cite{lahmer}. 
The proof given there is also heavily oriented on work of \cite{japaner} which itself is based on \cite{Melnik}.
Here we present a proof with the similar essential steps as in the other given proofs, but with more technical details and necessary exact descriptions.\\

The proof follows the usual guideline as seen for many partial differential equations (e.g. \cite[p.~353]{evans}):
We get existence and uniqueness of a weak solution by the usual procedure:
\begin{enumerate}
 \item Discretization via Galerkin approximation of infinite dimensional function spaces,
 \item energy estimates via Gronwall inequality in discretized space which provide finiteness of the discretized solution,
 \item weak limit of discretized solution provides weak existence of a solution in infinite dimensional function space,
 \item uniqueness of the solution is shown by applying estimates to the difference $w:=w_1-w_2$ of two solutions $w_1$ and $w_2$. 
Thus, the only solution to the homogeneous case is the trivial solution.
 
\end{enumerate}
\begin{thm}  \, \quad \\
Let $\Omega \subseteq \R^3$ be a bounded domain with sufficiently smooth boundary as specified in Remark \ref{def_smooth_bound}. Let the real valued material parameters $c^E, e$ and $\epsilon^S$ be defined as in Def.~\ref{nonnegativitiyconditions} and let $c^E$ and $\epsilon^S$ be symmetric and positive definite. 
The Rayleigh coefficients $\alpha$ and $\beta$ are assumed to be nonnegative. 
Let $T>0$ and $\rho >0$. \\
Then there exists a $C>0$ such that for any $\bfu_0\in \HB{1}, \bfu_1 \in L^2(\Omega)$ and $\phi^e \in H^1(0,T;H^{1/2}(\Gamma_e))$ there exists a unique solution
\begin{equation}
(\bfu,\phi)\in L^{\infty}(0,T;\HB{1}) \times L^{\infty}(0,T;H^1_{0,\Gamma}(\Omega))
\label{raeume1}
\end{equation}
with 
\begin{equation}
\bfdu \in L^{\infty}(0,T;L^2({\Omega})) \text{ and } \bfddu \in L^2(0,T;(H^{1}_\calB(\Omega))')
\label{raeume2}
\end{equation}
to Eq.~\eqref{eq_schwache_form_erstes_auftreten} 
satisfying the initial conditions
$$
\bfu(0)=\bfu_0,\quad \bfdu(0)=\bfu_1 \text{  on } \Omega
$$
and the following estimate holds:

\begin{equation}
\begin{aligned}
&\|\bfu\|_{L^{\infty}(0,T;\HB{1})} 
+ \|\bfdu\|_{L^{\infty}(0,T;L^2(\Omega))}+\|\bfddu\|_{L^{2}(0,T;(H^{1}_\calB(\Omega))')}
+\|\phi\|_{L^{\infty}(0,T;H^{1}_{0,\Gamma}(\Omega))} \\
\leq&\  C \left( \|\bfu_0\|_{\HB{1}} +\|\bfu_1\|_{L^2(\Omega)} + \|\phi^e\|_{H^1(0,T;H^{1/2}(\Gamma_e))}     \right).
\label{chp_2_weak_form}
\end{aligned}
\end{equation}
\label{theorem1}
\end{thm}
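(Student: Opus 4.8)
My plan is to follow the four-step Galerkin programme announced above. First I would fix Galerkin bases: an orthogonal system $\{w_k\}_{k\in\N}\subset\HB{1}$ that is orthonormal in $L^2(\Omega)$ for the displacement, and a system $\{\psi_k\}_{k\in\N}\subset H^1_0(\Omega)$ for $\phi_0$, and seek
$\bfu_m(t)=\sum_{k=1}^m d_m^k(t)\,w_k$ and $\phi_{0,m}(t)=\sum_{k=1}^m g_m^k(t)\,\psi_k$ solving the projected version of \eqref{eq_schwache_form_erstes_auftreten}. The decisive structural observation is that the electric equation carries no time derivative, so at each fixed $t$ it determines $\phi_{0,m}$ linearly from $\bfu_m$ and $\phi^e(t)$: since $\epsilon^S$ is symmetric positive definite, the bilinear form $\int_\Omega(\epsilon^S\nabla\phi_0)^T\nabla w\,d\Omega$ is coercive on $H^1_0(\Omega)$ by the Poincar\'e inequality, so the discrete electric block is uniquely solvable and yields a bounded solution operator $\phi_{0,m}=\Phi_m(\bfu_m,\phi^e)$. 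Substituting this into the mechanical equation reduces the discrete problem to a linear second-order ODE system in the coefficients $d_m(t)$, with forcing controlled by $\phi^e$ and $\dot\phi^e$; by the Picard--Lindel\"of theorem it has a unique solution on $[0,T]$, which in turn makes $\phi_{0,m}$ differentiable in time.

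The core of the argument is the energy estimate. I would test the mechanical equation with $v=\bfdu_m$ and the time-differentiated electric equation with $w=\phi_{0,m}$, then subtract the two. Using the symmetry of $c^E$ and $\epsilon^S$, the inertial, stiffness and dielectric terms assemble into the time derivative of
\[
E_m(t):=\tfrac12\int_\Omega\rho|\bfdu_m|^2\,d\Omega+\tfrac12\int_\Omega(c^E\calB\bfu_m)^T\calB\bfu_m\,d\Omega+\tfrac12\int_\Omega(\epsilon^S\nabla\phi_{0,m})^T\nabla\phi_{0,m}\,d\Omega,
\]
and, crucially, the two piezoelectric coupling contributions $\int_\Omega(e\calB\bfdu_m)^T\nabla\phi_{0,m}\,d\Omega$ cancel exactly. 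What survives on the left is $E_m'(t)$ together with the manifestly nonnegative damping terms $\alpha\int_\Omega\rho|\bfdu_m|^2\,d\Omega+\beta\int_\Omega(c^E\calB\bfdu_m)^T\calB\bfdu_m\,d\Omega$, while the right-hand side only pairs $\phi^e$ and $\dot\phi^e$ against $\calB\bfdu_m$ and $\nabla\phi_{0,m}$. To avoid relying on control of $\calB\bfdu_m$ (which would fail in the undamped limit $\beta=0$), I would integrate the $\phi^e$-term in time by parts, trading $\calB\bfdu_m$ for $\calB\bfu_m$, which is already controlled by $E_m$. Applying Cauchy--Schwarz and Young then bounds the forcing by $\varepsilon E_m(t)+C_\varepsilon\big(\|\phi^e\|^2+\|\dot\phi^e\|^2\big)$; absorbing the $\varepsilon$-term yields $E_m(t)\le C\big(E_m(0)+\int_0^t E_m(s)\,ds+\|\phi^e\|_{H^1(0,T;H^{1/2}(\Gamma_e))}^2\big)$. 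Bounding $E_m(0)$ by $\|\bfu_0\|_{\HB{1}}^2+\|\bfu_1\|_{L^2(\Omega)}^2+\|\phi^e(0)\|^2$ (the electric part at $t=0$ via the elliptic solve from $\bfu_0$, using $H^1(0,T;H^{1/2}(\Gamma_e))\hookrightarrow C([0,T];H^{1/2}(\Gamma_e))$) and invoking Gronwall a) gives a bound on $E_m(t)$ uniform in $m$ and $t$, i.e. uniform bounds in the norms of \eqref{raeume1}--\eqref{raeume2}.

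With these uniform bounds I would extract weak-$\ast$ convergent subsequences $\bfu_m\overset{\ast}{\rightharpoonup}\bfu$ in $L^\infty(0,T;\HB{1})$, $\bfdu_m\overset{\ast}{\rightharpoonup}\bfdu$ in $L^\infty(0,T;L^2(\Omega))$ and $\phi_{0,m}\overset{\ast}{\rightharpoonup}\phi_0$ in $L^\infty(0,T;H^1_0(\Omega))$, and pass to the limit in the Galerkin equations tested against fixed finitely-supported functions, using density of $\bigcup_m\mathrm{span}\{w_k\}$ and $\bigcup_m\mathrm{span}\{\psi_k\}$ to reach all $(v,w)\in\HB{1}\times H^1_0(\Omega)$; recovering $\phi=\phi_0+\phi^e\chi$ absorbs the remaining $\|\phi^e\|$ contribution and produces \eqref{chp_2_weak_form}. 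The bound $\bfddu\in L^2(0,T;(\HB{1})')$ then follows by reading the mechanical equation as an identity for $\bfddu$ and estimating each remaining term against the already-controlled norms. Attainment of the initial conditions $\bfu(0)=\bfu_0$, $\bfdu(0)=\bfu_1$ follows from the continuity-in-time embeddings invoked after \eqref{eq_schwache_form_erstes_auftreten} (cf. \cite{evans}).

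Finally, uniqueness is immediate from linearity: the difference of two solutions solves the homogeneous system with zero data, so the same energy identity gives $E(t)\le C\int_0^t E(s)\,ds$ with $E(0)=0$, whence Gronwall b) forces $\bfdu\equiv0$ and $\nabla\phi_0\equiv0$; combined with $\bfu(0)=0$ and coercivity of the electric form this yields $\bfu\equiv0$ and $\phi\equiv0$. I expect the main obstacle to be the energy estimate itself: because the electric field has no time derivative in the equations, producing a closed, Gronwall-ready inequality hinges on differentiating the elliptic equation in time (which is precisely why $\phi^e\in H^1(0,T;\cdot)$ is imposed) and on the exact cancellation of the coupling terms, with the time integration by parts needed to keep the estimate valid when $\beta=0$.
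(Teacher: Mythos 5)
Your proposal follows essentially the same route as the paper: Galerkin discretization, an energy identity obtained by testing the mechanical equation with $\dot{\mathbf{u}}_m$ and the \emph{time-differentiated} electric equation with $\phi_{0,m}$ (so that the piezoelectric coupling terms cancel), Gronwall, weak-$\ast$ extraction of limits, recovery of the $\ddot{\mathbf{u}}$-bound from the equation itself, and uniqueness via the homogeneous problem. Your preliminary reduction of the differential--algebraic structure --- solving the coercive discrete electric block for $\phi_{0,m}$ as a bounded function of $\mathbf{u}_m$ and $\phi^e$ before invoking ODE theory --- is a more explicit version of what the paper leaves implicit in Phase 1, and your handling of the forcing term is in fact \emph{more} careful than the paper's: the paper bounds $\int_0^t|\langle f(s),\dot{\mathbf{u}}_m(s)\rangle|\,ds$ by a Young split involving only $\|\dot{\mathbf{u}}_m\|_{L^2(\Omega)}$ even though $f$ acts through $\mathcal{B}v$, whereas your integration by parts in time, trading $\mathcal{B}\dot{\mathbf{u}}_m$ for the already-controlled $\mathcal{B}\mathbf{u}_m$, is the standard repair and keeps the estimate valid when $\beta=0$.

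The one step you gloss over is the admissibility of the test function in the uniqueness argument. You assert that the difference of two solutions satisfies ``the same energy identity,'' but that identity was derived by testing with $v=\dot{\mathbf{u}}$, and for the limit solution $\dot{\mathbf{u}}$ is only known to lie in $L^{\infty}(0,T;L^2(\Omega))$, not in $H^1_{\mathcal{B}}(\Omega)$; this is precisely the classical obstruction for second-order hyperbolic equations noted in the remark below Thm.~4 of \cite[\S 7.2.2]{evans}. The paper resolves it by observing that the damping term yields $\beta\|\mathcal{B}\dot{\mathbf{u}}_m\|_{L^2(0,T;L^2(\Omega))}<\infty$ uniformly, so that for $\beta>0$ the substitution $v=\dot{\mathbf{u}}$ survives the passage to the limit; in the genuinely undamped case $\beta=0$ one would instead need the integrated-test-function device of \cite[\S 7.1.2, Thm.~4]{evans}. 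Your outline should invoke one of these two repairs before Gronwall~b) can be applied to the difference of solutions.
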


\begin{proof}  
Note that many concepts of this proof are taken from \cite[chapter 7]{evans} and information regarding involved spaces can be found in \cite{adams}.

In the following constants denoted by the letter $C$ or $\tilde{C}$ are used. 
Unless explicitly specified otherwise we note that all these constants are positive $C_i>0\, ,i\ge 1$.

Weak solutions are functions $\bfu,\bfdu, \bfddu$ and $\phi_0$ as in Eq.~\eqref{raeume1} and Eq.~\eqref{raeume2} where $\phi_0=\phi+\phi_{\phi_e}$ such that 
for almost all $t \in [0,T]$
for all $(v,w)\in \HB{1} \times H^1_{0}(\Omega)$  the following equation holds:

\begin{equation}
\begin{aligned}
&\left< \rho \bfddu(t), v \right> +\alpha \left< \rho \bfdu(t),v \right> +\left< c^E \calB \bfu(t),\calB v \right>   + \beta\left< c^E \calB \bfdu(t),\calB v \right>  \\
+& \left<  e^T\nabla\phi_0(t), \calB v \right> + \left<  e \calB \bfu(t),\nabla w \right> -  \left< \epsilon^S \nabla \phi_0(t),\nabla w \right> \\
= & \left<f(t),v \right> +\left<g(t),w \right>
\end{aligned}
\label{eq_schwache_form}
\end{equation}
with 
$$\left<f(t),v\right>:=-\phi^e(t) \int_{\Omega} (e^T \nabla \chi)^T\calB v \,d\Omega ,$$
and 
$$\left<g(t),w\right>:=\phi^e(t) \int_{\Omega} (\epsilon^S \nabla \chi)^T \nabla w \,d\Omega .$$
Note that by the Riesz representation theorem there exists a unique representation for the latter functionals as an inner product, i.e. $\left<f,\cdot\right>$ and $\left<g,\cdot\right>$. 
As is common in the field of partial differential equation for convenience we will also use the same symbols $f$ and $g$ to refer to the Riesz-representative as well as the functionals $\left<f,\cdot\right>$ and $\left<g,\cdot\right>$.
Furthermore, we remember that $\chi\in H^1(\Omega)$ and that $\epsilon^S$, $e$ are constant. The integrals of the right hand side $\int_{\Omega} (e^T \nabla \chi)^T\calB v \,d\Omega$, $\int_{\Omega} (\epsilon^S \nabla \chi)^T \nabla w \,d\Omega$ are finite, their values $c_1(\Omega), c_2(\Omega) < \infty$ depend, e.g., ~only on domain $\Omega$ but not on time $t$.
Thus, by integrating this constant value over time we can estimate the Bochner-space norm of $f$ by 
$$
\|f\|_{H^1(0,T;(H^{1}_\calB(\Omega))'
)} \leq c_1(\Omega) \|\phi^e\|_{H^1(0,T)},
$$
and analogously we get
$$
\|g\|_{H^1(0,T;H^{-1}(\Omega))} \leq c_2(\Omega) \|\phi^e\|_{H^1(0,T)}.
$$

\textbf{Phase 1: Galerkin approximation}\\
The weak form is tested with test functions 
$\bfv_j\in H^1_\calB(\Omega)$ and $\bfw_j\in H^1_{0}(\Omega), \quad j \in \N$, with
$$
\bfu(t) \approx \bfu_m(t) = \sum_{j=1}^m u_m^j(t) \bfv_j,
$$
and
$$
\phi_0(t) \approx \phi_m(t) = \sum_{j=1}^m \phi_m^j(t) \bfw_j,
$$
where '$\approx$' is to be understood in the sense of an orthogonal projection in the appropriate spaces.
The finite dimensional spaces spanned by the test functions are defined as 
$$V_m:=\operatorname{span}\{\bfv_1,\dots,\bfv_m\} \quad \text{ and } \quad  W_m:=\operatorname{span}\{\bfw_1,\dots,\bfw_m\}.$$ We can assume that the dimension of the test function spaces $\operatorname{dim}(V_m)=\operatorname{dim}(W_m)=m$ are the same, for  $V_m$ in each vectorial component. So the test functions can be selected to be linearly independent.
Furthermore, the functions can be chosen such that
$$\overline{\bigcup_{m=1}^\infty V_m}=H^1_\calB(\Omega)\quad \text{and}\quad 
\overline{\bigcup_{m=1}^\infty W_m}=H^1_{0}(\Omega).$$

Then via standard theory for ordinary differential equations (see e.g. \cite{evans} or \cite{leugering}) for all $m\in \N$ and for all $(\bfv_m,w_m)\in V_m \times W_m$
there exists a unique solution
$$
(\bfu_m,\phi_m) \in C^2([0,T];V_m) \times C([0,T];W_m)$$
to the discretized version of Eq.~\eqref{eq_schwache_form} that fulfills the initial conditions 
$\bfu_m(0)=(\bfu_{0})_m,$ $\bfdu_m(0)=(\bfu_{1})_m$. 
For more information on Sobolev spaces involving time and space see also \cite[section 5.9.2]{evans}.

\textbf{Phase 2: Energy estimates}\\
The aim of this phase is to use Gronwall inequality to show an energy estimate from which the finiteness of
the finite dimensional solutions $(\bfu_m(t),\phi_m(t))$ in $L^\infty(0,T;H^1_\B(\Omega)) \times L^\infty(0,T;H^1_{0}(\Omega))$, $\bfdu_m$ in $L^\infty(0,T;L^2(\Omega))$ and $\bfddu_m$ in\\ \hfill $L^2(0,T;(H^{1}_\calB(\Omega))')$ can be deduced:
\vspace{3mm}

Let 
$$
\eta(t):=\left(\|\bfdu_m(t)\|^2_{L^2(\Omega)} +\|\bfu_m(t)\|^2_{H^1_\B(\Omega)} +\|\phi_m(t)\|^2_{H^1_{0}(\Omega)} \right).
$$
In order to use the Gronwall inequality we must show that there are constants $p,q\ge0$ such that $\eta(t) \leq p \int_0^t \eta(s) \, ds +q$ holds.
If this condition is true, then it can be shown that

\begin{equation}
\begin{aligned}
&\|\bfdu_m(t)\|^2_{L^2(\Omega)} +\|\bfu_m(t)\|^2_{H^1_\B(\Omega)} +\|\phi_m(t)\|^2_{H^1_{0}(\Omega)} \\
\leq & \left(  1+ p t e^{pt}  \right)     \Big( \|\bfdu_m(0)\|^2_{L^2(\Omega)} +\|\bfu_m(0)\|^2_{H^1_\B(\Omega)} +\|\phi_m(0)\|^2_{H^1_{0}(\Omega)} \\
&+ \|f\|^2_{L^2(0,T;(H^{1}_\calB(\Omega))')} +\|g\|^2_{H^1(0,T;H^{-1}(\Omega))}\Big)
\end{aligned}
\label{eq_166}
\end{equation}
holds almost everywhere in $[0,T]$. 
Thus, this must also be true for the essential supremum over $0\leq t \leq T$ and we will get finiteness in the $L^\infty(0,T;X)$ norm for the appropriate sub-spaces $X$.
In order to show the requirement we consider the following:

First, the discretized version of the weak form Eq.~\eqref{eq_schwache_form} is supposed to hold for all test functions $(\bfv_m,w_m)$.
Thus, it should also hold for $(\bfdu_m(t),0)$:

\begin{equation*}
\begin{aligned}
 &   \left<\rho \bfddu_m(t), \bfdu_m(t)\right>
+ \alpha \left<\rho \bfdu_m(t), \bfdu_m(t)\right> +\left<c^E \B \bfu_m(t), \B \bfdu_m(t)\right> \\
+& \beta \left<c^E \B \bfdu_m(t), \B \bfdu_m(t)\right>
+ \left<e^T \nabla \phi_m(t), \B \bfdu_m(t)\right> 
= \left< f(t), \bfdu_m(t)\right>.
\end{aligned}
\end{equation*}
By transposing the inner product and direct computation it is easy to see that one can swap the placement of constant scalars or matrices such as $\rho,\epsilon^S,c^E$ etc. (which are symmetric) in this bilinear form, e.g. the following holds:
$$\left<c^E \B \bfu_m(t), \B \bfdu_m(t)\right>=\left<(c^E)^T \B \bfdu_m(t), \B \bfu_m(t)\right>=\left<c^E \B \bfdu_m(t), \B \bfu_m(t)\right>.$$

Thus, by bilinearity of the inner product
$$
2\left<c^E \B \bfdu_m(t), \B \bfu_m(t)\right> = \frac{d}{dt} \left<c^E \B \bfu_m(t), \B \bfu_m(t)\right>.
$$

Hence, the above equation simplifies to 

\begin{equation}
\begin{aligned}
 & \frac{1}{2} \frac{d}{dt}\left(  \left<\rho \bfdu_m(t), \bfdu_m(t)\right> +\left<c^E \B \bfu_m(t), \B \bfu_m(t)\right>    \right)
+ \alpha \left<\rho \bfdu_m(t), \bfdu_m(t)\right>\\
 +& \beta \left<c^E \B \bfdu_m(t), \B \bfdu_m(t)\right>
+ \left<e^T \nabla \phi_m(t), \B \bfdu_m(t)\right>
= \left< f(t), \bfdu_m(t)\right>.
\end{aligned}
\label{eq_157}
\end{equation}
Now we differentiate the weak form Eq.~\eqref{eq_schwache_form} with respect to $t$ and test it with $(0,\phi_m(t))$, taking into account that the test functions $\bfv_m,w_m$ do not depend on time $t$, therefor the time derivatives  $\dot{\bfv}_m,\dot{w}_m \equiv 0$: 

\begin{equation}
\begin{aligned}
  \left<e \B \bfdu_m(t), \nabla \phi_m(t) \right> -  \frac{1}{2}\frac{d}{dt}\left<\epsilon^S \nabla \phi_m(t) ,\nabla \phi_m(t)\right>
	= \left< \dot g(t),\phi_m(t)\right>. \\
\end{aligned}
 \label{eq_158}
\end{equation}

A subtraction of Eq.~\eqref{eq_157} and Eq.~\eqref{eq_158} gives
\begin{equation}
\begin{aligned}
 & \frac{1}{2} \frac{d}{dt}\left(  \left<\rho \bfdu_m(t), \bfdu_m(t)\right> +\left<c^E \B \bfu_m(t), \B \bfu_m(t)\right>  
+\left<\epsilon^S \nabla \phi_m(t) ,\nabla \phi_m(t)\right>   \right) \\
+& \alpha \left<\rho \bfdu_m(t), \bfdu_m(t)\right> + \beta \left<c^E \B \bfdu_m(t), \B \bfdu_m(t)\right> \\
=& \left< f(t), \bfdu_m(t)\right> -\left< \dot g(t),\phi_m(t)\right>.\\
\end{aligned}
\label{eq_159}
\end{equation}
The last equation Eq.~\eqref{eq_159} is integrated with respect to $t$.
\begin{equation}
\begin{aligned}
 \mathcal{F}_l(t):=&  \left<\rho \bfdu_m(t), \bfdu_m(t)\right> +\left<c^E \B \bfu_m(t), \B \bfu_m(t)\right>  
+\left<\epsilon^S \nabla \phi_m(t) ,\nabla \phi_m(t)\right>  \\
&+ 2 \alpha \int_0^t\left<\rho \bfdu_m(s), \bfdu_m(s)\right> ds+2 \beta \int_0^t\left<c^E \B \bfdu_m(s), \B \bfdu_m(s)\right> ds \\
=&  \left<\rho \bfdu_m(0), \bfdu_m(0)\right> +\left<c^E \B \bfu_m(0), \B \bfu_m(0)\right>  +\left<\epsilon^S \nabla \phi_m(0) ,\nabla \phi_m(0)\right> \\
&+ 2\int_0^t\left< f(t), \bfdu_m(s)\right>ds -2\int_0^t\left< \dot g(t),\phi_m(s)\right>ds =:\mathcal{F}_r(t).
\end{aligned}
\label{eq_160}
\end{equation}
Hence, in short we can write $$\mathcal{F}_l(t)=\mathcal{F}_r(t).$$
Now the aim is to use this equation to show that the requirements for the Gronwall inequality are met.

We start by showing that the left-hand side $\mathcal{F}_l(t)$ of Eq.~\eqref{eq_160} has a lower bound.
With $\lambda_{1,mech}$ the smallest eigenvalue of $c^E$ (which is strictly positive) one estimates
\begin{equation}
\begin{aligned}
\int_\Omega \left(\B\bfu_m(t)\right)^Tc^E\B\bfu_m(t) \,d\Omega & \geq \lambda_{1,mech} \int_\Omega \left(\B\bfu_m(t)\right)^T\B\bfu_m(t) \,d\Omega \\
                                                           &= \lambda_{1,mech} \|\B\bfu_m(t)\|^2_{L^2(\Omega)} \\
	 &= \lambda_{1,mech} \left(     \|\bfu_m(t)\|^2_{H^1_\B(\Omega)}    -\|\bfu_m(t)\|^2_{L^2(\Omega)}       \right) .
\end{aligned}
\label{eq_161}
\end{equation}
With $\lambda_{1,elec}$ the smallest eigenvalue of $\epsilon^S$ (which is strictly positive) one estimates
\begin{equation*}
\begin{aligned}
\int_\Omega \left(  \nabla \phi_m(t) \right)^T \epsilon^S  \nabla \phi_m(t) \,d\Omega &  \geq \lambda_{1,elec} \int_\Omega \left(  \nabla \phi_m(t) \right)^T \nabla \phi_m(t) \,d\Omega \\
         &= \lambda_{1,elec} \| \nabla \phi_m(t)\|^2_{L^2(\Omega)}. \\
\end{aligned}
\end{equation*}
From the Poincar\'e inequality (see e.g. \cite{schweizer}), we obtain $c_1,c_2 \in \R$ such that
\begin{equation}
\begin{aligned}
&\int_\Omega \left(  \nabla \phi_m(t) \right)^T \epsilon^S  \nabla \phi_m(t) \,d\Omega
\geq \lambda_{1,elec} \| \nabla \phi_m(t)\|^2_{L^2(\Omega)} \\
=& (1+c_2)c_1 \| \nabla \phi_m(t)\|^2_{L^2(\Omega)} = c_1\left(  \underbrace{c_2\| \nabla \phi_m(t)\|^2_{L^2(\Omega)}}_{\geq \| \phi_m(t)\|^2_{L^2(\Omega)}} +\| \nabla \phi_m(t)\|^2_{L^2(\Omega)}\right) \\
\geq&\ C_{elec}      \|\phi_m(t)\|^2_{H^1_{0}(\Omega)}.  \\
\end{aligned}
\label{eq_162}
\end{equation}

By nonnegativity of $\rho,\alpha,\beta$ and the two inequalities Eq.~\eqref{eq_161} and Eq.~\eqref{eq_162} one can now estimate 
\begin{equation*}
\begin{aligned}
 C_1\left(\|\bfdu_m(t)\|^2_{L^2(\Omega)} +\|\bfu_m(t)\|^2_{H^1_\B(\Omega)} 
+\|\phi_m(t)\|^2_{H^1_{0}(\Omega)}-c_{mech}\|\bfu_m(t)\|^2_{L^2(\Omega)} \right) \leq \mathcal{F}_l(t) 
\end{aligned}
\label{eq_212}
\end{equation*}
with a positive constant $C_1>0$.
Furthermore, by the inequalities Eq.~\eqref{eq_161} and Eq.~\eqref{eq_162} and Cauchy--Schwarz and Young inequalities the right hand side $\mathcal{F}_r(t)$ can be bounded from above with $c,\tilde{c}>0$:

\begin{equation*}
\begin{aligned}
\mathcal{F}_r(t)=&   \underbrace{\left<\rho \bfdu_m(0), \bfdu_m(0)\right>}_{= \rho \|\bfdu_m(0)\|^2_{L^2(\Omega)}} +\underbrace{\left<c^E \B \bfu_m(0), \B \bfu_m(0)\right>}_{\leq c \|\bfu_m(0)\|^2_{H^1_\B(\Omega)}}  +\underbrace{\left<\epsilon^S \nabla \phi_m(0) ,\nabla \phi_m(0)\right>}_{\leq \tilde{c}\|\phi_m(0)\|^2_{H^1_{0}(\Omega)} }  \\
&+ 2\int_0^t\left< f(s), \bfdu_m(s)\right>\,ds -2\int_0^t\left< \dot g(s),\phi_m(s)\right>\,ds\\
\\
\leq &\ \widehat{C_2}\left( \|\  \bfdu_m(0)\|^2_{L^2(\Omega)} +  \|\bfu_m(0)\|^2_{H^1_\B(\Omega)}+ \|\phi_m(0)\|^2_{H^1_{0}(\Omega)}     \right)\\
&+ 2\int_0^t\left|\left< f(s), \bfdu_m(s)\right>\right|\,ds 
+2\int_0^t\left|\left< \dot g(s),\phi_m(s)\right>\right|\, ds\\
\\
\leq &\ \widehat{C_2}\left( \|\  \bfdu_m(0)\|^2_{L^2(\Omega)} +  \|\bfu_m(0)\|^2_{H^1_\B(\Omega)}+ \|\phi_m(0)\|^2_{H^1_{0}(\Omega)}     \right)\\
&+ \int_0^t \underbrace{ \|\bfu_m(s)\|^2_{H^1_{\calB}(\Omega)}+ \|\bfdu_m(s)\|^2_{L^2(\Omega)} +\|\phi_m(s)\|^2_{H^1_0(\Omega)} }_{\ge 0} \, ds \\
&+ 2\|f\|^2_{L^2(0,T;(H^{1}_\calB(\Omega))')}  + 2\|g\|^2_{H^1(0,T;H^{-1}(\Omega))}    \\
\end{aligned}
\end{equation*}
Hence, we get
\begin{equation}
\begin{aligned}
\mathcal{F}_r(t)
\leq&\  C_2\left(  \|\bfdu_m(0)\|^2_{L^2(\Omega)} + \|\bfu_m(0)\|^2_{H^1_\B(\Omega)}  +\|\phi_m(0)\|^2_{H^1_{0}(\Omega)}   \right)\\
&+\int_0^t \left(  \|\bfdu_m(s)\|^2_{L^2(\Omega)} + \|\bfu_m(s)\|^2_{H^1_\B(\Omega)}  +\|\phi_m(s)\|^2_{H^1_{0}(\Omega)}   \right)ds\\
&+ \|f\|^2_{L^2(0,T;(H^{1}_\calB(\Omega))')}  + \|g\|^2_{H^1(0,T;H^{-1}(\Omega))}    \\
\end{aligned}
\end{equation}
with a positive constant $C_2>0$.
As $\mathcal{F}_l(t)=\mathcal{F}_r(t)$ it is now clear that
\begin{equation}
\begin{aligned}
&\ C_1\left(\|\bfdu_m(t)\|^2_{L^2(\Omega)} +\|\bfu_m(t)\|^2_{H^1_\B(\Omega)} +\|\phi_m(t)\|^2_{H^1_{0}(\Omega)}-c_{mech}\|\bfu_m(t)\|^2_{L^2(\Omega)} \right)\\
\leq&\  C_2\left(  \|\bfdu_m(0)\|^2_{L^2(\Omega)} + \|\bfu_m(0)\|^2_{H^1_\B(\Omega)}  +\|\phi_m(0)\|^2_{H^1_{0}(\Omega)}   \right)\\
&+\int_0^t \left(  \|\bfdu_m(s)\|^2_{L^2(\Omega)} + \|\bfu_m(s)\|^2_{H^1_\B(\Omega)}  +\|\phi_m(s)\|^2_{H^1_{0}(\Omega)}   \right)ds\\
&+ \|f\|^2_{L^2(0,T;(H^{1}_\calB(\Omega))')}  + \|g\|^2_{H^1(0,T;H^{-1}(\Omega))}.    \\
\end{aligned}
\label{eq_214_old}
\end{equation}
Utilizing the inequality $
\| u(t)\|_{L^2(\Omega)}^2 \leq 2 \|u(0)\|_{L^2(\Omega)}^2 + 2T\int_0^t \| \dot{u}(s) \|^2_{L^2(\Omega)} \, ds
$ (see \cite{wloka} p.~425) for $T$ large enough, we can remove $c_{mech}\| \bfu_m(t) \|^2_{L^2(\Omega)}$ from the left hand side of the inequality to obtain:
\begin{equation}
\begin{aligned}
&\ C_1\left(\|\bfdu_m(t)\|^2_{L^2(\Omega)} +\|\bfu_m(t)\|^2_{H^1_\B(\Omega)} +\|\phi_m(t)\|^2_{H^1_{0}(\Omega)} \right)\\ 
\leq&\ C_3\left(  \|\bfdu_m(0)\|^2_{L^2(\Omega)} + \|\bfu_m(0)\|^2_{H^1_\B(\Omega)}  +\|\phi_m(0)\|^2_{H^1_{0}(\Omega)}   \right)\\
&+C_4\int_0^t \left(  \|\bfdu_m(s)\|^2_{L^2(\Omega)} + \|\bfu_m(s)\|^2_{H^1_\B(\Omega)}  +\|\phi_m(s)\|^2_{H^1_{0}(\Omega)}   \right)ds\\
&+ \|f\|^2_{L^2(0,T;(H^{1}_\calB(\Omega))')}  + \|g\|^2_{H^1(0,T;H^{-1}(\Omega))}    \\
\end{aligned}
\label{eq_214}
\end{equation}
where $C_4>0$ now also depends on the fixed value $T$.

Let
$$
\eta(t):=\|\bfdu_m(t)\|^2_{L^2(\Omega)} +\|\bfu_m(t)\|^2_{H^1_\B(\Omega)} +\|\phi_m(t)\|^2_{H^1_{0}(\Omega)} 
$$
and let $$\tilde{C_2}:= \frac{1}{C_1}\left(C_3 \eta(0) + \|f\|^2_{L^2(0,T;(H^{1}_\calB(\Omega))')} +\|g\|^2_{H^1(0,T;H^{-1}(\Omega))}\right)  \ge 0. $$
Then the above inequality simplifies to
$$
 \eta(t) \leq \frac{C_4}{C_1} \int_0^t \eta(s) \, ds + \tilde{C_2}.
$$
Hence, all requirements for Gronwall inequality have been shown to hold and it can now be safely applied and the result simplified to:
\begin{equation}
\begin{aligned}
&\|\bfdu_m(t)\|^2_{L^2(\Omega)} +\|\bfu_m(t)\|^2_{H^1_\B(\Omega)} +\|\phi_m(t)\|^2_{H^1_{0}(\Omega)} \\
\leq & \left(  \frac{\tilde{C_3}}{C_1} +\frac{C_4 \tilde{C_3}}{C_1^2}te^{ \frac{C_4}{C_1}t }  \right)
     \Big( \|\bfdu_m(0)\|^2_{L^2(\Omega)} +\|\bfu_m(0)\|^2_{H^1_\B(\Omega)} +\|\phi_m(0)\|^2_{H^1_{0}(\Omega)} \\
&+ \|f\|^2_{L^2(0,T;(H^{1}_\calB(\Omega))')} +\|g\|^2_{H^1(0,T;H^{-1}(\Omega))}\Big)
\end{aligned}
\label{eq_166}
\end{equation}
holds almost everywhere in $[0,T]$.

We will return to this inequality shortly after considering the bilinear form
\begin{equation}
A:H^1_{0}(\Omega) \times H^1_{0}(\Omega) \rightarrow \R, \quad A(\phi_m(t),w) :=\left< \epsilon^S \nabla \phi_m(t), \nabla w\right> 
\label{bilinear_form}
\end{equation}
and the continuous linear functional on $H^1_{0}(\Omega)$ for a fixed $\bfu_m(t)$
$$
b(w):=\left< e\B \bfu_m(t),\nabla w \right> - \left< g(t), w \right>
$$
which together form the weak form Eq.~\eqref{eq_schwache_form} tested by $(0,w)$.
This bilinear form $A$ is coercive (inequality Eq.~\eqref{eq_162}) and continuous:
\begin{equation*}
\begin{aligned}
\left|\left< \epsilon^S \nabla \phi_m(t), \nabla w\right> \right|
\leq &\lambda_{max} \left|\left< \nabla \phi_m(t), \nabla w\right>\right| \\
\leq &\lambda_{max} \|\nabla \phi_m(t)\|_{L^2(\Omega)} \cdot \| \nabla w \|_{L^2(\Omega)} \\
\leq &\lambda_{max} \| \phi_m(t)\|_{H^1_{0}(\Omega)} \cdot \|  w \|_{H^1_{0}(\Omega)}\\
\end{aligned}
\end{equation*}

Using the Lax--Milgram lemma and the Young inequality we get the estimate for $A(\phi_m(t),w) =b(w)$ $\forall w \in H^1_0(\Omega)$:
\begin{equation}
\begin{aligned}\label{lax_milgram}
   \|\phi_m(t)\|_{H^1_{0}(\Omega)}^2 \leq &\ \tilde{M}  \|b\|_{H^{-1}(\Omega)}^2  \\
= & \ \tilde{M}  \sup_{\| w\|_{H^{1}_{0}(\Omega)} \leq 1  } \|b(w)\|_{H^{1}_{0}(\Omega)}^2 \\
= &\ \tilde{M}  \sup_{\| w\|_{H^{1}_{0}(\Omega)} \leq 1  } \left|  \left< e\B \bfu_m(t) ,\nabla w \right>      -\left<g(t),w\right>     \right|^2 \\ 
\leq &\ \tilde{M}  \sup_{\| w\|_{H^{1}_{0}(\Omega)} \leq 1  } \left( \left|  \left< e\B \bfu_m(t),\nabla w \right>  \right| + \left| \left<g(t) , w \right>           \right|\right)^2  \\ 
%\stackrel{\textrm{\tiny{Young}}}{\leq }&
\leq & \ \tilde{M}  \sup_{\| w\|_{H^{1}_{0}(\Omega)} \leq 1  } \left( 2\underbrace{\left| \left< e\B \bfu_m(t),\nabla w \right>  \right|^2}_{\stackrel{\textrm{\tiny{C.S.}}}{\leq} 
\| e\B \bfu_m(t) \|_{L^2(\Omega)}^2 \cdot \|  w \|_{H^1_{0}(\Omega)}^2   }
 +2 \left| \left<g(t),w \right>           \right|^2   \right)  \\
 \leq &\ 2M \left( \|e\B\bfu_m(t)\|^2_{L^2(\Omega)}  + \|g(t)\|^2_{H^{-1}(\Omega)} \right) 
 \end{aligned}
\end{equation}

Furthermore, for $t=0$ we get

\begin{equation}
\begin{aligned}
 \|\phi_m(0)\|_{H^1_{0}(\Omega)}^2 \leq & 2M \left( \|e\B \bfu_m(0)\|_{L^2(\Omega)}^2 + \| g(0) \|_{H^{-1}(\Omega)}^2\right)  \\
\end{aligned}
\label{phi0fuerunique}
\end{equation}
Hence, we obtain 
$$
 \|\phi_m(0)\|^2_{H^1_{0}(\Omega)} \leq C_5\left( \|(\bfu_0)_m\|^2_{H^1_\B(\Omega)} + \|\phi^e(0)\|^2_{H^{1/2}(\Gamma_e)}    \right).
$$

Finally, from the Gronwall inequality we can thus deduce
\begin{equation}
\begin{aligned}
\|\bfdu_m\|^2_{L^\infty(0,T;L^2(\Omega))} +\|\bfu_m\|^2_{L^\infty(0,T;H^1_\B(\Omega))} +\|\phi_m\|^2_{L^\infty(0,T;H^1_{0}(\Omega))} \\
\leq
C_6 \left(  \|(\bfu_1)_m\|^2_{L^2(\Omega)}  +  \|(\bfu_0)_m\|^2_{H^1_\B(\Omega)} +\|\phi^e\|^2_{L^\infty(0,T;H^{1/2}(\Gamma_e))}  \right).
\end{aligned}
\label{gronwall_folgerung}
\end{equation}

Now knowing that all these values are finite we can deduce from Eq.~\eqref{eq_160} with $\beta >0 $ that also
\begin{equation}
 \beta\| \mathcal{B} \bfdu_m \|_{L^2(0,T;L^2(\Omega))} < \infty.
\label{Bbfdu_beschr}
\end{equation}

It now remains to show that $\|\bfddu_m\|_{L^{2}(0,T;(H^{1}_\calB(\Omega))')}$ is finite.
We follow the general guideline given in e.g. \cite[p. 355]{evans}.
% evans chapter 7.1 b) energy estimates

Fix any $\tilde{v}\in \HB{1}$ with $\|\tilde{v}\|_{\HB{1}} \leq 1$ and $\tilde{v}:=\tilde{v}^1+\tilde{v}^2$ with 
$\tilde{v}^1\in \operatorname{span}\{\bfv_i\}_{i=1}^m$ and $\left< \tilde{v}^2,\bfv_i \right>=0$ for all $1 \leq i \leq m$.
Since $\{ \bfv_i\}_{i=0}^m$ can be assumed orthogonal in $\HB{1}$, $$\|\tilde{v}^1\|_{\HB{1}} \leq \|\tilde{v}\|_{\HB{1}} \leq 1.$$
Now with $\bfu_m=\sum_{i=0}^m u_m^i(t) \bfv_i$ the following holds almost everywhere in $[0,T]$:
\begin{equation*}
\begin{aligned}
\left< \bfddu_m(t),\tilde{v} \right>_{H^1_\B(\Omega)}=\left< \bfddu_m(t),\tilde{v} \right>&=\left< \bfddu_m(t),\tilde{v}^1 \right>\\
& =  \left<f(t), \tilde{v}^1\right> 
  -\left< c^E \calB \bfu_m(t) ,\calB \tilde{v}^1\right> -\left<e^T\nabla \phi_m(t),\calB \tilde{v}^1  \right> \\
& \quad -\alpha\left<\rho \bfdu_m(t),\tilde{v}^1  \right> - \beta\left< c^E \calB \bfdu_m(t) ,\calB \tilde{v}^1\right>,
\end{aligned}
\end{equation*}
where the subscript $H^1_\B(\Omega)$ denotes the
duality pairing between $\left( \HB{1}\right)'$ and $\HB{1}$.

Using the Cauchy--Schwarz inequality we can deduce
\begin{equation}
\begin{aligned}
&\left|\left< \bfddu_m(t),\tilde{v} \right>_{H^1_{\calB}(\Omega)}\right| \\
&\leq C_7\left(  \|f(t)\|_{(H^1_\B(\Omega))'} 
 + \|\bfu_m(t)\|_{H^1_{\calB}(\Omega)} +\|\bfdu_m(t)\|_{L^2(\Omega)} + \|\calB \bfdu_m(t) \|_{L^2(\Omega)} \right)\\
& \quad+\left|\left<e^T\nabla \phi_m(t),\calB \tilde{v}^1  \right>\right|. 
\label{upp}
\end{aligned}
\end{equation}

Using the Lax--Milgram lemma again on the form Eq.~\eqref{bilinear_form} for an arbitrary $t\in[0,T]$ we can further deduce with analogous arguments as in Eq.~\eqref{lax_milgram}
that the following holds
$$
\left|\left<e^T\nabla \phi_m(t),\calB \tilde{v}^1  \right>_{H^1_{\calB}(\Omega)}\right| \leq M 
\left( \|\calB \bfu_m(t)\|^2_{L^2(\Omega)}+        \|\phi^e(t)\|^2_{H^{1/2}(\Gamma_e)} \right).
$$
Thus by repetitive application of the Young inequality we get for the norm
\begin{equation*}
\begin{aligned}
\|\bfddu_m(t)\|^2_{(H^{1}_\calB(\Omega))'} =&  \sup_{ \|v\|_{H^{1}_{\calB}(\Omega)} \leq 1  }  \left|\left< \bfddu_m(t),v \right>\right|^2_{H^1_{\calB}(\Omega)} \\
\leq&C_8\Big( \|f(t)\|^2_{(H^1_\B(\Omega))'} + \|\bfu_m(t)\|^2_{H^1_{\calB}(\Omega)} +\|\bfdu_m(t)\|^2_{L^2(\Omega)}\\
&+ \|\calB \bfdu_m(t) \|^2_{L^2(\Omega)} +\|\calB \bfu_m(t)\|^2_{L^2(\Omega)}+       \|\phi^e(t)\|^2_{H^{1/2}(\Gamma_e)} \Big).
\end{aligned}
\end{equation*}

Now we have finiteness for all components, hence we can finally integrate inequality Eq.~\eqref{upp} over $[0,T]$.

We rearrange the terms and apply the estimates Eq.~\eqref{gronwall_folgerung} and Eq.~\eqref{Bbfdu_beschr}.
\begin{equation}
\begin{aligned}
&\int_{0}^T \|\bfddu_m(s)\|^2_{(H^{1}_\calB(\Omega))'}ds \\
 \leq &\ C_9 \int_{0}^T \Big(\ \|f(s)\|^2_{(H^1_\B(\Omega))'} +  \|\bfu_m(s)\|^2_{H^1_{\calB}(\Omega)} +   
\|\bfdu_m(s)\|^2_{L^2(\Omega)}+ \|\calB \bfdu_m(s) \|^2_{L^2(\Omega)}\\
&\quad \quad +\|\phi^e(s)\|^2_{H^{1/2}(\Gamma_e)} \Big)\
\, ds \\
 \leq&\ C_{10} \left( \|(\bfu_0)_m\|^2_{H^1_{\calB}(\Omega)  }  + \|(\bfu_1)_m\|^2_{L^2(\Omega)  } + \| \phi^e  \|^2_{H^1(0,T;H^{1/2}(\Gamma_e))   }  \right)
\end{aligned}
\label{upp_energy}
\end{equation}

Thus, it is now clear that
\begin{equation}
\begin{aligned}
(\bfu_m,\phi_m) &\in L^\infty(0,T;H^1_\B(\Omega)) \times L^\infty(0,T;H^1_{0}(\Omega)), \\
\bfdu_m        &\in L^\infty(0,T;L^2(\Omega)),   \\
\bfddu_m       &\in L^2(0,T;(H^{1}_\calB(\Omega))').
\end{aligned}
\label{energy_estimates1}
\end{equation}

\textbf{Phase 3: Weak limit}\\
Following e.g. \cite[p.~384]{evans}, \cite[p.~239]{schweizer} from the energy estimates Eq.~\eqref{gronwall_folgerung} and Eq.~\eqref{upp_energy} we get the boundedness of the sequences
\begin{equation*}
\begin{aligned}
(\bfu_m,\phi_m)_{m=1}^\infty &\ \text{in}\ L^\infty(0,T;H^1_\B(\Omega)) \times L^\infty(0,T;H^1_{0}(\Omega)),\\
(\bfdu_m)_{m=1}^\infty& \ \text{in}\ L^\infty(0,T;L^2(\Omega))\  \text{and}\\
(\bfddu_m )_{m=1}^\infty&\ \text{in}\ L^2(0,T;(H^{1}_\calB(\Omega))').
\end{aligned}
\end{equation*}
Thus there exist subsequences
\begin{equation*}
 (\bfu_{m_l},\phi_{m_l})_{l=1}^\infty \subseteq (\bfu_m,\phi_m)_{m=1}^\infty,\ 
(\bfdu_{m_l})_{l=1}^\infty \subseteq (\bfdu_{m})_{m=1}^\infty \ \text{and}\  
(\bfddu_{m_l} )_{l=1}^\infty \subseteq (\bfddu_{m} )_{m=1}^\infty,
\end{equation*}

with $(\bfu, \phi)\in L^\infty(0,T;H^1_\B(\Omega)) \times L^\infty(0,T;H^1_{0,\Gamma}(\Omega))$, 
$\bfdu        \in L^\infty(0,T;L^2(\Omega))$ and
$\bfddu       \in L^2(0,T;(H^{1}_\calB(\Omega))')$
such that 
\begin{equation}
\begin{aligned}
\bfu_{m_l}&\rightharpoonup     \bfu      \text{ weakly-* in } L^\infty(0,T;H^1_\B(\Omega)),  \\
\phi_{m_l}&\rightharpoonup     \phi_0      \text{ weakly-* in }  L^\infty(0,T;H^1_{0}(\Omega)), \\
\bfdu_{m_l}&\rightharpoonup    \bfdu     \text{ weakly-* in } L^\infty(0,T;L^2(\Omega)),   \\
\bfddu_{m_l}&\rightharpoonup   \bfddu    \text{ weakly in } L^2(0,T;(H^{1}_\calB(\Omega))').\\
\end{aligned}
\label{energy_estimates2}
\end{equation}
We now proceed to show that the weak limit is a solution of the weak form.
Following \cite[p. 384]{evans} we fix a $N\in \N$ and choose functions $\bfv\in C^1(0,T; H^1_\B(\Omega))$ and $w\in C^1(0,T;H^1_0(\Omega))$ 
having the form

\begin{equation}
\bfv(t):=\sum_{k=1}^N u^k_m(t)\bfv_k, \quad w(t):=\sum_{k=1}^N \phi^k_m(t)w_k.
\label{weak_discrete}
\end{equation}
We choose $m\geq N$, multiply the discretized versions for each pair $(\bfv_k, w_k)$  of the weak form Eq.~\eqref{eq_schwache_form} with $(u^k_m(t), \phi^k_m(t))$, 
sum over $k=1,\dots,N$, integrate with respect to $t$. This yields
\begin{equation}
\begin{aligned}
&\int_0^T \Big(
\int_\Omega \rho \bfddu_m(s)^T \bfv \,d\Omega +\alpha \int_{\Omega} \rho \bfdu_m(s)^T\bfv \,d\Omega +\int_\Omega \left(c^E \calB \bfu_m(s)\right)^T\calB \bfv \,d\Omega   \\
\,&\quad \quad + \beta\int_\Omega \left(c^E \calB \bfdu_m(s)\right)^T\calB \bfv \,d\Omega + \int_\Omega \left( e^T\nabla\phi_m(s)\right)^T \calB \bfv \,d\Omega  \\
\,&\quad \quad+ \int_\Omega \left( e \calB \bfu_m(s)\right)^T\nabla w \,d\Omega -  \int_\Omega \left(\epsilon^S \nabla \phi_m(s)\right)^T\nabla w \,d\Omega\Big)\, ds \\
  \,=& \int_0^T \left<f(s),\bfv \right>\, ds +\int_0^T \left<g(s),w \right>\, ds.
		\label{eq_lahmer_178}
\end{aligned}
\end{equation}
Fixing $m=m_l$ and using Eq.~\eqref{energy_estimates2} we obtain in the limit $m \rightarrow \infty$ along the subsequence $m_l$
\begin{equation}
\begin{aligned}
&\int_0^T \Big(
\int_\Omega \rho \bfddu(s)^T \bfv \,d\Omega +\alpha \int_{\Omega} \rho \bfdu(s)^T\bfv \,d\Omega +\int_\Omega \left(c^E \calB \bfu(s)\right)^T\calB \bfv \,d\Omega \\
\,& \quad \quad + \beta\int_\Omega \left(c^E \calB \bfdu(s)\right)^T\calB \bfv \,d\Omega + \int_\Omega \left( e^T\nabla\phi_0(s)\right)^T \calB \bfv \,d\Omega \\
\,&\quad \quad + \int_\Omega \left( e \calB \bfu(s)\right)^T\nabla w \,d\Omega -  \int_\Omega \left(\epsilon^S \nabla \phi_0(s)\right)^T\nabla w \,d\Omega\Big)\, ds \\
  \,=& \int_0^T \left<f(s),\bfv \right>\, ds +\int_0^T \left<g(s),w \right>\, ds.
	\label{eq_lahmer_179}
\end{aligned}
\end{equation}
Noting that all functions of form Eq.~\eqref{weak_discrete} are dense in the according spaces this equality holds for all functions $\bfv\in L^2(0,T;H^1_\B(\Omega)),\, w\in L^2(0,T;H^1_{0}(\Omega))$.
In particular it follows that also
\begin{equation}
\begin{aligned}
&\int_\Omega \rho \bfddu^T \bfv \,d\Omega +\alpha \int_{\Omega} \rho \bfdu^T\bfv \,d\Omega +\int_\Omega \left(c^E \calB \bfu\right)^T\calB \bfv \,d\Omega   + \beta\int_\Omega \left(c^E \calB \dot{\bfu}\right)^T\calB \bfv \,d\Omega  \\
\,&+ \int_\Omega \left( e^T\nabla\phi_0\right)^T \calB \bfv \,d\Omega + \int_\Omega \left( e \calB \bfu\right)^T\nabla w \,d\Omega -  \int_\Omega \left(\epsilon^S \nabla \phi_0\right)^T\nabla w \,d\Omega \\
  \,=& \left<f,\bfv \right> + \left<g,w \right>
	\label{mein_249}
\end{aligned}
\end{equation}
almost everywhere $t\in[0,T]$ for all $\bfv \in H^1_\calB(\Omega)$ and $w\in H^1_{0}(\Omega)$. 

Following \cite{lahmer} and \cite[section 7.2.2 Thm. 3, p. 385]{evans} we confirm that the initial conditions are also met.
Choose any function $(\bfv,0)$ with $\bfv \in C^2([0,T];H^1_\calB(\Omega))$ and $\bfv(T)=\dot{\bfv}(T)=0$. 
By integrating by parts twice with respect to $t$ of Eq.~\eqref{eq_lahmer_178} we get

\begin{equation}
\begin{aligned}
&\int_0^T \Big(
\int_\Omega \rho \bfu_m(t)^T \bfddv \,d\Omega -\alpha \int_{\Omega} \rho \bfu_m(t)^T\bfdv \,d\Omega +\int_\Omega \left(c^E \calB \bfu_m(t)\right)^T\calB \bfv \,d\Omega  \\
\,&\quad \quad - \beta\int_\Omega \left(c^E \calB \bfu_m(t)\right)^T\calB \bfdv \,d\Omega  
+ \int_\Omega \left( e^T\nabla\phi_m(t)\right)^T \calB \bfv \,d\Omega \Big)\, dt \\
  \,=& 
	\int_0^T \left<f(t),\bfv \right>\, dt - \left<\rho \bfu_m(0),\bfdv(0)\right> +\left<\rho \bfdu_m(0),\bfv(0)\right> \\
\,&\quad \quad +\alpha \left<\rho\bfu_m(0),\bfv(0) \right> +\beta \left<c^E\B\bfu_m(0),\B\bfv(0) \right>
\label{eq_lahmer_182}
\end{aligned}
\end{equation}
and analogously using Eq.~\eqref{eq_lahmer_179} we get
\begin{equation}
\begin{aligned}
&\int_0^T \Big(
\int_\Omega \rho \bfu(t)^T \bfddv \,d\Omega -\alpha \int_{\Omega} \rho \bfu(t)^T\bfdv \,d\Omega +\int_\Omega \left(c^E \calB \bfu(t)\right)^T\calB \bfv \,d\Omega \\  
\,&\quad \quad - \beta\int_\Omega \left(c^E \calB \bfu(t)\right)^T\calB \bfdv \,d\Omega +\int_\Omega \left( e^T\nabla\phi_0(t)\right)^T \calB \bfv \,d\Omega \Big)\, dt \\
  \,=& 
	\int_0^T \left<f(t),\bfv \right>\, dt - \left<\rho \bfu(0),\bfdv(0)\right> +\left<\rho \bfdu(0),\bfv(0)\right> \\
\,&\quad \quad +\alpha \left<\rho\bfu(0),\bfv(0) \right> +\beta \left<c^E\B\bfu(0),\B\bfv(0) \right>.
		\label{eq_lahmer_181}
\end{aligned}
\end{equation}
For Eq.~\eqref{eq_lahmer_182} we set $m=m_l$ and recall Eq.~\eqref{energy_estimates2} to deduce
\begin{equation}
\begin{aligned}
&\int_0^T \Big(
\int_\Omega \rho \bfu(t)^T \bfddv \,d\Omega -\alpha \int_{\Omega} \rho \bfu(t)^T\bfdv \,d\Omega +\int_\Omega \left(c^E \calB \bfu(t)\right)^T\calB \bfv \,d\Omega \\  
\,&\quad \quad- \beta\int_\Omega \left(c^E \calB \bfu(t)\right)^T\calB \bfdv \,d\Omega + \int_\Omega \left( e^T\nabla\phi_0(t)\right)^T \calB \bfv \,d\Omega \Big)\, dt \\
  \,=& 
	\int_0^T \left<f(t),\bfv \right>\, dt - \left<\rho \bfu_0,\bfdv(0)\right> +\left<\rho \bfu_1,\bfv(0)\right> \\
\,&\quad \quad +\alpha \left<\rho\bfu_0,\bfv(0) \right> +\beta \left<c^E\B\bfu_0,\B\bfv(0) \right>.
		\label{eq_lahmer_183}
\end{aligned}
\end{equation}
By equating coefficients of Eq.~\eqref{eq_lahmer_181} and Eq.~\eqref{eq_lahmer_183} (set either $\bfv(0)$ or $\bfdv(0)$ to zero) we conclude $\bfu(0)=\bfu_0$ and $\bfdu(0)=\bfu_1$.

\textbf{Phase 4: Uniqueness}\\

Following e.g. \cite[p. 385]{evans} it suffices to show that the only weak solution 
with $$f \equiv 0,\, g\equiv 0,\,  \phi^e \equiv 0,\, \bfu_0=\bfu_1\equiv 0$$ is 
$$
\bfu\equiv 0,\, \phi \equiv 0.
$$
Notice that by property Eq.~\eqref{Bbfdu_beschr}  $\| \mathcal{B} \bfdu_m \|_{L^2(0,T;L^2(\Omega))} $ is finite.
Hence, the remark in \cite[remark below Thm 4, section 7.2.2 c), p. 385]{evans} does not apply to our case and we can continue in the fashion of \cite[Thm. 4, section 7.1.2c),   p. 358]{evans} instead.
Passing to limits, we substitute $\bfv=\bfu$ and $w=\phi_0$ in the original weak form. 
This is not prohibited as by property Eq.~\eqref{Bbfdu_beschr} all components exist also in the limit.
Hence, we can deduce that the following non-discretized inequality holds
\begin{equation*}
\begin{aligned}
&\ C_1\left(\|\bfdu(t)\|^2_{L^2(\Omega)} +\|\bfu(t)\|^2_{H^1_\B(\Omega)} +\|\phi_0(t)\|^2_{H^1_{0}(\Omega)} \right)\\
\leq& \  C_3\left(  \|\bfdu(0)\|^2_{L^2(\Omega)} + \|\bfu(0)\|^2_{H^1_\B(\Omega)}  +\|\phi_0(0)\|^2_{H^1_{0}(\Omega)}   \right)\\
&+C_4\int_0^t \left(  \|\bfdu(s)\|^2_{L^2(\Omega)} + \|\bfu(s)\|^2_{H^1_\B(\Omega)}  +\|\phi_0(s)\|^2_{H^1_{0}(\Omega)}   \right)ds\\
&+ \|f\|^2_{L^2(0,T;(H^{1}_\calB(\Omega))')}  + \|g\|^2_{H^1(0,T;H^{-1}(\Omega))}.
\end{aligned}
\end{equation*}
In the case $t=0$ we get from Eq.~\eqref{phi0fuerunique} that $\|\phi(0)\|^2_{H^1_0(\Omega)}=0$.
Hence, we now note that 
$$\tilde{C_2}= \frac{1}{C_1}\left(C_3 \eta(0) + \|f\|^2_{L^2(0,T;(H^{1}_\calB(\Omega))')} +\|g\|^2_{H^1(0,T;H^{-1}(\Omega))}\right) =0. $$
Finally, we can apply the second part of the Gronwall inequality to conclude that 
$$
\eta(t)=\|\bfdu(t)\|^2_{L^2(\Omega)} +\|\bfu(t)\|^2_{H^1_\B(\Omega)} +\|\phi_0(t)\|^2_{H^1_{0}(\Omega)} =0 \quad \text{ a.e. } t\in[0,T].
$$
 
Thus, the only solution can be the trivial solution.
\end{proof}
Through the theorem we know what requirements we need to get existence of a solution of the weak form. Now prerequisites can be derived to achieve higher regularities of the solutions.\\
The following theorem is inspired by Thm. 5, chapter 7.2 in \cite{evans}. 
The proof uses ideas from \cite{leugering} adapted for additional Rayleigh damping.
\begin{thm}
Let all requirements of Thm.~\ref{theorem1} hold.
If additionally 
$\bfu_0 \in H^2(\Omega)$,
$\bfu_1 \in H^1(\Omega)$,
$\beta\bfu_1 \in H^2(\Omega)$,
$\phi^e \in H^2(0,T;H^{1/2}(\Gamma_e))$, 
then 
\begin{equation}
\begin{aligned}
\bfu         \in L^\infty(0,T;H^1_\B(\Omega)),\quad 
\bfdu        &\in L^\infty(0,T;H^1_\B(\Omega)),\quad
\bfddu       \in L^\infty(0,T;L^2(\Omega)),\\
\phi         \in L^\infty(0,T;H^1_{0,\Gamma}(\Omega)),\quad
\dot{\phi}   &\in L^\infty(0,T;H^1_{0,\Gamma}(\Omega)). \\
\end{aligned}
\end{equation}
\label{theorem2}
\end{thm}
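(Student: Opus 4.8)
The plan is to differentiate the whole coupled system once in time and to observe that the pair $(\bfdu,\dphi_0)$ then solves a problem of \emph{exactly the same structure} as the one already handled in Theorem~\ref{theorem1}, with the roles of $\bfu_0,\bfu_1,\phi_0,f,g$ taken over by $\bfu_1,\bfddu(0),\dphi_0,\dot f,\dot g$. Differentiating the mechanical part of the weak form Eq.~\eqref{eq_schwache_form} gives, for all admissible $v$,
\[
\left<\rho\,\partial_t\bfddu,v\right>+\alpha\left<\rho\,\bfddu,v\right>+\left<c^E\calB\bfdu,\calB v\right>+\beta\left<c^E\calB\bfddu,\calB v\right>+\left<e^T\nabla\dphi_0,\calB v\right>=\left<\dot f,v\right>,
\]
while the differentiated electrostatic equation reads $\left<e\calB\bfdu,\nabla w\right>-\left<\epsilon^S\nabla\dphi_0,\nabla w\right>=\left<\dot g,w\right>$. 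Since the coefficient matrices $\rho,c^E,e,\epsilon^S$ are unchanged, I would re-run the four phases of the proof of Theorem~\ref{theorem1} on this differentiated system at the Galerkin level, where the approximants $\bfu_m,\phi_m\in C^2$ in $t$ make $\bfdu_m,\bfddu_m,\dphi_m$ legitimate objects. The energy identity obtained by testing the once-differentiated mechanical equation with $\bfddu_m$ and the twice-differentiated electrostatic equation with $\dphi_m$, then subtracting and integrating in $t$, is precisely the one-derivative-up analogue of Eqs.~\eqref{eq_157}--\eqref{eq_160}; Gronwall then yields a uniform bound on $\|\bfddu_m\|_{L^\infty(0,T;L^2(\Omega))}+\|\bfdu_m\|_{L^\infty(0,T;\HB{1})}+\|\dphi_m\|_{L^\infty(0,T;H^1_0(\Omega))}$, from which the claimed regularities follow by passing to weak-$*$ limits exactly as in Phase~3 and identifying the limits with $\bfdu,\bfddu,\dphi_0$. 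The two statements $\bfu\in L^\infty(0,T;\HB{1})$ and $\phi\in L^\infty(0,T;H^1_{0,\Gamma}(\Omega))$ are simply inherited from Theorem~\ref{theorem1}.

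For this scheme to close I must control the data of the differentiated problem. The forcing terms $\dot f,\dot g$ are, by the computation following Eq.~\eqref{eq_schwache_form}, bounded in terms of $\dot\phi^e$ and $\ddot\phi^e$, which is exactly why the hypothesis is strengthened to $\phi^e\in H^2(0,T;H^{1/2}(\Gamma_e))$. Two of the three initial data are then easy. The new ``displacement'' initial value is $\bfdu(0)=\bfu_1$, and $\bfu_1\in H^1(\Omega)\subseteq\HB{1}$ bounds it in the energy space. The new ``potential'' initial value $\dphi_0(0)$ is obtained from the differentiated electrostatic equation at $t=0$ by the Lax--Milgram estimate Eq.~\eqref{lax_milgram}, which bounds $\|\dphi_0(0)\|_{H^1_0(\Omega)}$ by $\|\calB\bfu_1\|_{L^2(\Omega)}$ (finite since $\bfu_1\in H^1(\Omega)$) and $\|\dot g(0)\|_{H^{-1}(\Omega)}$. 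Finally $\dphi=\dphi_0+\dot\phi^e\chi$, and since $\phi^e\in H^2(0,T)\hookrightarrow C^1$ and $\chi\in H^1(\Omega)$, the lift contribution lies in $L^\infty(0,T;H^1(\Omega))$, so the bound on $\dphi_0$ upgrades to the asserted $\dphi\in L^\infty(0,T;H^1_{0,\Gamma}(\Omega))$.

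The main obstacle is the remaining initial datum, the $L^2$-bound on the acceleration $\bfddu(0)$, which plays the role of the ``initial velocity'' of the differentiated system and enters the Gronwall constant through $\eta(0)$. I would read it off the original equation at $t=0$, which in strong form gives
\[
\rho\,\bfddu(0)=\calB^T\!\big(c^E\calB\bfu_0+\beta c^E\calB\bfu_1+e^T\nabla\phi(0)\big)-\alpha\rho\,\bfu_1 .
\]
Each summand must lie in $L^2(\Omega)$: the term $\calB^T c^E\calB\bfu_0$ uses $\bfu_0\in H^2(\Omega)$, the term $\beta\calB^T c^E\calB\bfu_1$ uses precisely the hypothesis $\beta\bfu_1\in H^2(\Omega)$ (explaining why only $H^1(\Omega)$ is demanded of $\bfu_1$ itself), and the coupling term $\calB^T e^T\nabla\phi(0)$ requires $\phi(0)\in H^2(\Omega)$. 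This last point carries the real work: I would invoke elliptic $H^2$-regularity for the electrostatic equation at $t=0$, whose right-hand side $\nabla\!\cdot(e\calB\bfu_0)$ is in $L^2(\Omega)$ because $\bfu_0\in H^2(\Omega)$, after homogenising with a Dirichlet lift $\chi$ chosen in $H^2(\Omega)$ (admissible since $\partial\Omega$ is sufficiently smooth and $\phi^e$ is scalar in space).

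The same $L^2$-bound must, however, be produced \emph{uniformly in $m$} at the Galerkin level, and this is the genuinely technical step. Testing the discretised equation at $t=0$ with $\bfddu_m(0)$ gives $\rho\|\bfddu_m(0)\|_{L^2(\Omega)}^2$ on the left, while on the right the stiffness and coupling terms carry the unwanted factor $\calB\bfddu_m(0)$; these have to be integrated by parts back onto $(\bfu_0)_m$ and $(\bfu_1)_m$ so as to reproduce the estimate $\|\bfddu_m(0)\|_{L^2(\Omega)}\le C\big(\|\bfu_0\|_{H^2(\Omega)}+\beta\|\bfu_1\|_{H^2(\Omega)}+\|\phi(0)\|_{H^2(\Omega)}\big)$. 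Making this legitimate requires choosing the Galerkin initial data so that $(\bfu_0)_m\to\bfu_0$ and $(\bfu_1)_m\to\bfu_1$ strongly enough (for instance in $H^2(\Omega)$) for the integrations by parts to pass to the approximants without loss of control; arranging this compatibly with the fixed basis $\{\bfv_j\}$ is where I expect the argument to be most delicate. Once this uniform bound is in hand, the Gronwall step and the weak-limit passage go through verbatim, establishing all five regularity statements.
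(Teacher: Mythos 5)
Your proposal follows essentially the same route as the paper's proof: differentiate the weak form in time, test the once-differentiated mechanical equation with $\bfddu_m$ and the twice-differentiated electrostatic equation with $\dphi_m$, run the same energy/Gronwall machinery, and reduce everything to bounding $\|\bfddu_m(0)\|_{L^2(\Omega)}$ via the strong equation at $t=0$, using $\bfu_0\in H^2(\Omega)$, $\beta\bfu_1\in H^2(\Omega)$, elliptic $H^2$-regularity for $\phi(0)$, and an $H^2$ Dirichlet lift $\chi$ --- exactly the ingredients the paper uses. Your closing concern about making the $\bfddu_m(0)$ bound uniform in $m$ is legitimate and is in fact treated more briefly in the paper (which simply asserts the right-hand side is bounded independently of $m$ after passing to the strong form), so your proposal is, if anything, slightly more careful on that point.
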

\begin{proof}

We differentiate the weak form Eq.~\eqref{eq_schwache_form} once with respect to time $t$ and test the result first with $(\bfddu_m(t),0)$ to obtain
\begin{equation*}
\begin{aligned}
\frac{1}{2}\frac{d}{dt} \bigg( \left<\rho \bfddu_m(t), \bfddu_m(t) \right>  + \left< c^{E}\B\bfdu_m(t),\B\bfdu_m(t)\right>\bigg) 
+ \alpha \left<\rho \bfddu_m(t), \bfddu_m(t) \right>  \\
+\beta \left<c^E \B\bfddu_m(t), \B\bfddu_m(t) \right>+ \left<e \nabla \dphi_m(t),\B \bfddu_m(t) \right>  
= \left<\dot{f}(t),\bfddu_m(t) \right>.
\end{aligned}
\end{equation*}
Then we differentiate the weak form Eq.~\eqref{eq_schwache_form} twice with respect to time $t$ and test the result first with $(0,\dphi_m(t))$ to obtain
\begin{equation*}
\begin{aligned}
\left< e\B\bfddu_m(t),\nabla \dphi_m(t) \right>- \frac{1}{2}\frac{d}{dt}\left< \epsilon^S \nabla \dphi_m(t),\nabla \dphi_m(t)\right> = \left< \ddot{g}(t),\dot{\phi}_m(t)\right>.
\end{aligned}
\end{equation*}
Analogously to the proof of Thm.~\ref{theorem1} we subtract these two results and integrate with respect to $t$ to obtain in analogy to Eq.~\eqref{eq_160}
\begin{equation}
\begin{aligned}
 & \left(  \left<\rho \bfddu_m(t), \bfddu_m(t)\right> +\left<c^E \B \bfdu_m(t), \B \bfdu_m(t)\right>  
+\left<\epsilon^S \nabla \dphi_m(t) ,\nabla \dphi_m(t)\right>   \right) \\
&+ 2 \alpha \int_0^t\left<\rho \bfddu_m(s), \bfddu_m(s)\right> \, ds+2 \beta \int_0^t\left<c^E \B \bfddu_m(s), \B \bfddu_m(s)\right>\,ds\\
=&
 \left(  \left<\rho \bfddu_m(0), \bfddu_m(0)\right> +\left<c^E \B \bfdu_m(0), \B \bfdu_m(0)\right>  +\left<\epsilon^S \nabla \dphi_m(0) ,\nabla \dphi_m(0)\right>   \right)\\
&+ 2\int_0^t\left< \dot{f}(s), \bfdu_m(s)\right>ds -2\int_0^t\left< \ddot g(s),\dphi_m(s)\right>ds\\
\end{aligned}
\label{eq_160_neu}
\end{equation}
or, again, abbreviated as $\mathcal{F}_l=\mathcal{F}_r$.
Analogously to inequality Eq.~\eqref{eq_214} we then can obtain
\begin{equation}
\begin{aligned}
&\ C_1\left(\|\bfddu_m(t)\|^2_{L^2(\Omega)} +\|\bfdu_m(t)\|^2_{H^1_\B(\Omega)} +\|\dphi_m(t)\|^2_{H^1_{0}(\Omega)} \right) -c_{mech} \|\bfdu_m(t)\|^2_{L^2(\Omega)}  \\
\leq& \ C_3\left(  \|\bfddu_m(0)\|^2_{L^2(\Omega)} + \|\bfdu_m(0)\|^2_{H^1_\B(\Omega)}  +\|\dphi_m(0)\|^2_{H^1_{0}(\Omega)}   \right)\\
&+\ C_4\int_0^t \left(  \|\bfddu_m(s)\|^2_{L^2(\Omega)}+  \|\bfdu_m(s)\|^2_{H^1_\B(\Omega)}    +\|\dphi_m(s)\|^2_{H^1_{0}(\Omega)}   \right)ds\\
&+\ \|f\|^2_{H^1(0,T;(H^{1}_\calB(\Omega))')}  + \|g\|^2_{H^2(0,T;H^{-1}(\Omega))}    \\
\end{aligned}
\label{eq_214_neu}
\end{equation}
for some $C_1,C_3,C_4>0$.	
Note that by deriving the weak form which we then test by $(0,\phi_m(t))$ we additionally obtain a bilinear form similar to Eq.~\eqref{bilinear_form} and can analogously deduce with the Lax--Milgram lemma that
\begin{equation*}
\begin{aligned}
 \|\dphi_m(0)\|_{H^1_{0}(\Omega)}^2 \leq & 2M \left( \|e\B \bfdu_m(0)\|_{L^2(\Omega)}^2 + \| \dot g(0) \|_{H^{-1}(\Omega)}^2\right). 
\end{aligned}
\end{equation*}
This is only possible because of the added requirement of increased regularity of $\bfdu(0)$ and $\dot g$.
Furthermore, by the additional requirements on $\bfu_m(0) \in H^2(\Omega)$ we also obtain (estimating the $H^2$ norm by the norm of the Laplacian, see e.g. \cite{stein})
\begin{equation*}
\begin{aligned}
 \|\phi_m(0)\|_{H^2(\Omega)} \leq C\left(\|\bfu_m(0)\|_{H^2(\Omega)} +   \| {g(0)}\|_{H^{-1}(\Omega)} \right). \\
\end{aligned}
\end{equation*}

In order to utilize the Gronwall lemma we are left to show finiteness of \\ $\|\bfddu_m(0)\|^2_{L^2(\Omega)}$.
Notice that by the increased regularity of $\bfu_m(0)\in H^2(\Omega)$ and $\phi_m(0)\in H^2(\Omega)$ the weak solution is also a \emph{strong} solution, i.e., not quite a \emph{classical} solution but solves the classical equations in $t=0$ almost everywhere, see e.g.~\cite[section 2.3 and 3.5 ]{larsson}.
Thus, by evaluating the strong system in $t=0$ and using the initial data and previously deduced inequalities we obtain
\begin{equation*}
\begin{aligned}
\|\rho \bfddu_m(0)\|_{L^2(\Omega)} &= \|\alpha \rho \bfdu_m(0)-\calB^T \left( c^E \calB \bfu_m(0) + \beta c^E \calB \bfdu_m(0) + e^T \nabla\phi_m(0)\right)\\
\,& \quad  +{f(0)} \|_{L^2(\Omega)} \\
                                 &\leq \|\alpha \rho \bfdu_m(0)\|_{L^2(\Omega)}+\|\calB^T  c^E \calB \bfu_m(0) \|_{L^2(\Omega)} \\
                                 & \, \quad     
+ \|\calB^T \beta c^E \calB \bfdu_m(0)\|_{L^2(\Omega)} + \|\calB^Te^T \nabla\phi_m(0)  \|_{L^2(\Omega)} + \|{f(0)}\|_{L^2(\Omega)}.\\ 
\end{aligned}
\end{equation*}
Note that this $f$ is given by the Dirichlet ĺift ansatz for the strong system. Therefor we choose $\chi \in H^2(\Omega)$ where $\chi\vert_{\Gamma_g}=0$ and $\chi\vert_{\Gamma_e}=1$. With this requirement the right hand sight of the above inequality is bounded independently of $m$.

Since all components are finite, analogously to inequality Eq.~\eqref{eq_166} with 
$$
\eta(t):=\|\bfddu_m(t)\|^2_{L^2(\Omega)}+\|\bfdu_m(t)\|^2_{H^1_\B(\Omega)}  +\|\dphi_m(t)\|^2_{H^1_{0}(\Omega)}
$$
we can apply the Gronwall lemma to obtain that

\begin{equation*}
\begin{aligned}
&\|\bfddu_m(t)\|^2_{L^2(\Omega)} +\|\bfdu_m(t)\|^2_{H^1_\B(\Omega)} +\|\dphi_m(t)\|^2_{H^1_{0}(\Omega)} \\
\leq & \left(  \frac{\tilde{C_3}}{C_1} +\frac{C_4 \tilde{C_3}}{C_1^2}te^{ \frac{C_4}{C_1}t }  \right)
     \Big( \|\bfddu_m(0)\|^2_{L^2(\Omega)} +\|\bfdu_m(0)\|^2_{H^1_\B(\Omega)} +\|\dphi_m(0)\|^2_{H^1_{0}(\Omega)} \\
&+  \|f\|^2_{H^1(0,T;(H^{1}_\calB(\Omega))')}  + \|g\|^2_{H^2(0,T;H^{-1}(\Omega))}     \Big)
\end{aligned}
\end{equation*}
holds almost everywhere in $[0,T]$. 

Using results from Thm.~\ref{theorem1} it is now clear that 
\begin{equation}
\begin{aligned}
\bfu         \in L^\infty(0,T;H^1_\B(\Omega)),\quad
\bfdu        &\in L^\infty(0,T;H^1_\B(\Omega)),\quad
\bfddu       \in L^\infty(0,T;L^2(\Omega)),\\
\phi         \in L^\infty(0,T;H^1_{0,\Gamma}(\Omega)),\quad 
\dot{\phi}   &\in L^\infty(0,T;H^1_{0,\Gamma}(\Omega)). 
\end{aligned}
\end{equation}
\end{proof}
\begin{rem}
One may think that in order to achieve $\bfddu \in$ $L^\infty(0,T;L^2(\Omega))$ it is only required that $\bfdu(0)=\bfu_1 \in H^1(\Omega)$ instead of $\beta\bfu_1 \in H^2(\Omega)$ (or more precisely $\|\B^T \beta c^E\B\bfu_1\|_{L^2}(\Omega))< \infty$).
However, this is not the case.

The condition $\beta \bfu_1 \in H^2(\Omega)$ is required to show that $\|\bfddu_m(0)\|_{L^2(\Omega)}$ is finite, such that the Gronwall inequality can be applied.
\end{rem}
\begin{rem}
In e.g. \cite[p.~390, Eq.~(59)]{evans} a $H^2$ regularity for $\bfu$ is achieved by selecting the test functions for $\bfu$ to be the complete eigenfunction sequence of $-\Delta \bfu$ which, indirectly, allows an estimation of $\|\bfu\|_{H^2(\Omega)}$. 
A similar argument should also be possible for $\B^T \B$ (or more precisely the operator that works on the solution vector $(\mathbf{u},\phi)^T$ 
and contains $\B^T \B$). 
This would directly increase the regularity of $\phi$ so that not only $\bfu \in L^\infty(0,T;H^2(\Omega))$ but also $\phi \in L^\infty(0,T;H^2(\Omega) \cap H^1_{0,\Gamma}(\Omega)) $.

However, the authors did not follow that argumentation. 
Note that the here occurring differential operators are slightly different from the Laplacian. 
Hence, this leads to rather unpleasant changes due to the now very technical arguments and spaces.
In that case, it would be possible to reduce the regularity requirements, however this would also change the resulting spaces and increase the cost of technical proof steps.
\end{rem}

With the estimations and equations in Theorem \ref{theorem1} and the corresponding proof, a long-time behavior of the energy function $\eta$ and in particular of each component can be derived.
\begin{cor}
Let all requirements of Thm.~\ref{theorem1} hold and let $\alpha,\beta >0$ strictly.
If additionally there exists a $t_0\in \R, \, t_0 \ge 0$ such that  $\phi^e(t) = 0$ for $t\ge t_0$,
then 
$$\Vert \dot{\mathbf{u}}_m(t)\Vert_{L^2(\Omega)}\rightarrow 0,\quad \Vert \mathcal{B}\mathbf{u}_m(t)\Vert_{L^2(\Omega)}\rightarrow 0,\quad \Vert \phi_m(t)\Vert_{H^1_0(\Omega)}\rightarrow 0$$
for $t\rightarrow \infty$.\\
Furthermore the energy of the system 
 $$\eta(t)=\Vert \dot{\mathbf{u}}_m(t)\Vert^{2}_{L^2(\Omega)}+\Vert \mathbf{u}_m(t)\Vert^{2}_{H^1_{\mathcal{B}}(\Omega)}+\Vert \phi_m(t)\Vert^2_{H^1_{0}(\Omega)}$$
converges to a constant $\eta(t) \rightarrow c\in \mathbb{R}^{+}$ for $t\rightarrow \infty$.
\end{cor}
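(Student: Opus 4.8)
The plan is to argue entirely within the finite-dimensional Galerkin system, since the statement concerns the approximants $\mathbf u_m,\phi_m$, and to exploit that the excitation switches off after $t_0$. Both functionals are proportional to $\phi^e$ (see the definitions of $\langle f,\cdot\rangle$ and $\langle g,\cdot\rangle$ following Eq.~\eqref{eq_schwache_form}), so $f\equiv g\equiv\dot g\equiv 0$ for $t\ge t_0$ and the Galerkin system becomes autonomous there. Writing it in matrix form with mass matrix $M$, stiffness $K$ and piezoelectric coupling $C$ (all symmetric, $M$ positive definite, $K$ positive semidefinite) and dielectric matrix $G$ (positive definite), the elliptic equation reads $C U_m=G\Phi_m$, so $\Phi_m=G^{-1}CU_m$, and substitution reduces the mechanical equation, for $t\ge t_0$, to $M\ddot U_m+D\dot U_m+\tilde K U_m=0$ with $D:=\alpha M+\beta K$ and reduced stiffness $\tilde K:=K+C^{T}G^{-1}C$.

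First I would use the physical energy $E(t):=\langle\rho\dot{\mathbf u}_m,\dot{\mathbf u}_m\rangle+\langle c^E\B\mathbf u_m,\B\mathbf u_m\rangle+\langle\epsilon^S\nabla\phi_m,\nabla\phi_m\rangle$ — the bracket differentiated in Eq.~\eqref{eq_159} — which equals $\langle M\dot U_m,\dot U_m\rangle+\langle\tilde K U_m,U_m\rangle$ after eliminating $\Phi_m$. Specializing Eq.~\eqref{eq_159} to $t\ge t_0$, where the right-hand side vanishes, gives the dissipation identity $\tfrac12\frac{d}{dt}E=-\alpha\langle\rho\dot{\mathbf u}_m,\dot{\mathbf u}_m\rangle-\beta\langle c^E\B\dot{\mathbf u}_m,\B\dot{\mathbf u}_m\rangle=-\langle D\dot U_m,\dot U_m\rangle\le 0$, using $\alpha,\beta>0$, $\rho>0$ and positive definiteness of $c^E$. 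Hence $E$ is non-increasing and, being nonnegative, converges to some $E_\infty\ge 0$; integrating the identity over $[t_0,\infty)$ shows $\int_{t_0}^\infty\langle D\dot U_m,\dot U_m\rangle\,ds<\infty$, so in particular $\int_{t_0}^\infty\|\dot{\mathbf u}_m\|^2_{L^2(\Omega)}\,ds<\infty$.

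Because the reduced system is autonomous and finite-dimensional for $t\ge t_0$, I would then invoke LaSalle's invariance principle with $E$ as Lyapunov function. The dissipation vanishes exactly when $\dot{\mathbf u}_m=0$, and on the largest invariant subset of $\{\dot{\mathbf u}_m=0\}$ one has $\ddot{\mathbf u}_m=0$ and hence $\tilde K U_m=0$; thus every trajectory approaches the equilibrium set $\{\dot{\mathbf u}_m=0,\ \tilde K U_m=0\}$. Since coercivity of $c^E$ yields $\ker\tilde K=\{U:\B\mathbf u_m=0\}$, this gives simultaneously $\|\dot{\mathbf u}_m(t)\|_{L^2(\Omega)}\to 0$ and $\|\B\mathbf u_m(t)\|_{L^2(\Omega)}\to 0$; feeding the latter into the Lax--Milgram estimate Eq.~\eqref{lax_milgram} with $g\equiv 0$, i.e. $\|\phi_m\|^2_{H^1_0(\Omega)}\le C\|\B\mathbf u_m\|^2_{L^2(\Omega)}$, gives $\|\phi_m(t)\|_{H^1_0(\Omega)}\to 0$. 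As an alternative to LaSalle, $\int\|\dot{\mathbf u}_m\|^2<\infty$ together with boundedness of $\ddot{\mathbf u}_m$ from the ODE yields $\|\dot{\mathbf u}_m\|\to 0$ by Barbalat's lemma.

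For the convergence of $\eta$ I would note that $D=\alpha M+\beta K$ is positive definite — here the strict assumption $\alpha>0$ is essential — which rules out secular, polynomially growing solutions: a solution of the form $a+tb$ would force $\langle Db,b\rangle=0$ and hence $b=0$. Consequently the zero eigenvalue of the pencil $\lambda^2M+\lambda D+\tilde K$ is semisimple, the state splits into a strictly decaying deformational part and a constant rigid-body part, and $U_m(t)\to U_{m,\infty}\in\ker\tilde K$. Therefore $\|\mathbf u_m(t)\|_{L^2(\Omega)}\to\|\mathbf u_{m,\infty}\|_{L^2(\Omega)}$, and since the other three pieces vanish, $\eta(t)=\|\dot{\mathbf u}_m\|^2_{L^2(\Omega)}+\|\mathbf u_m\|^2_{L^2(\Omega)}+\|\B\mathbf u_m\|^2_{L^2(\Omega)}+\|\phi_m\|^2_{H^1_0(\Omega)}\to\|\mathbf u_{m,\infty}\|^2_{L^2(\Omega)}=:c\ge 0$. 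The main obstacle is exactly this kernel: since $\B$ annihilates the rigid-body motions, $\tilde K$ is only positive semidefinite, so one cannot hope for $\mathbf u_m\to 0$. The argument must separate the strictly damped deformational subspace, where everything decays, from the rigid-body subspace, where $\alpha M$ still damps the velocity but lets the displacement drift to a nonzero limit — which is precisely why only $\B\mathbf u_m$, $\dot{\mathbf u}_m$ and $\phi_m$, but not $\mathbf u_m$ itself, tend to zero.
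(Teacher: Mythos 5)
Your proof is correct, but it takes a genuinely different route from the paper's. The paper stays at the level of the integrated energy balance Eq.~\eqref{eq_160}: for $t\ge t_0$ the right-hand side is a constant $c_1$, so $\tilde\eta(t)+\gamma(t)=c_1$ with $\gamma$ monotone increasing, whence both converge; it then deduces $\dot{\mathbf u}_m\to0$, $\B\dot{\mathbf u}_m\to0$ from convergence of $\gamma$, and identifies the limits of $\left<c^E\B\mathbf u_m,\B\mathbf u_m\right>$ and $\left<\epsilon^S\nabla\phi_m,\nabla\phi_m\right>$ by testing the weak form with $(\mathbf u_m(t),0)$ and $(0,\phi_m(t))$ and observing that the common coupling term forces a nonpositive limit to equal a nonnegative one, hence both are zero. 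You instead eliminate $\Phi_m=G^{-1}CU_m$, reduce to the autonomous damped pencil $M\ddot U_m+D\dot U_m+\tilde K U_m=0$, and conclude via LaSalle plus semisimplicity of the zero eigenvalue. Your route is more robust on one point where the paper is actually shaky: the paper's step ``$\gamma$ converges, thus the integrands converge to $0$'' is not valid for a general integrable function and needs something like your Barbalat argument or the spectral decay you supply. On the other hand, your argument is intrinsically finite-dimensional (it uses $G^{-1}$ and a matrix spectral decomposition), whereas the paper's energy-identity argument is the one its closing remark claims can be lifted to the non-discretized solution. One small caveat: LaSalle presupposes a precompact orbit, and the energy $E$ does not control the rigid-body component of $U_m$, so the invariance principle alone is not quite enough; your pencil argument (semisimple zero eigenvalue, all nonzero eigenvalues in the open left half-plane since $D=\alpha M+\beta K>0$) does establish boundedness and convergence of $U_m$ and should be regarded as the actual proof, with LaSalle redundant. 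Both proofs agree that only $\B\mathbf u_m$, $\dot{\mathbf u}_m$, $\phi_m$ vanish while $\|\mathbf u_m\|_{L^2(\Omega)}$ merely converges, which is why $\eta\to c$ rather than $0$.
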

\begin{proof}
The right hand side $\mathcal{F}_r(t)$ of the energy balance Eq.~\eqref{eq_160} is constant for $t \geq  t_0$ as no new energy is given into the system starting from time $t_0$, i.e. $\mathcal{F}_r(t)=c_1 \in \R^{ \ge 0}$ for $t\ge t_0$.
Let 
$$\gamma(t):=2 \alpha \int_0^t\left<\rho \bfdu_m(s), \bfdu_m(s)\right> ds+2 \beta \int_0^t\left<c^E \B \bfdu_m(s), \B \bfdu_m(s)\right> ds,$$

and let 
\begin{align}
 \tilde{\eta}(t):= \left<\rho \bfdu_m(t), \bfdu_m(t)\right> +\left<c^E \B \bfu_m(t), \B \bfu_m(t)\right> + \left<\epsilon^S \nabla \phi_m(t) ,\nabla \phi_m(t)\right>.
 \end{align}\label{etatilde}

Then  Eq.~\eqref{eq_160} implies that $ \tilde{\eta}(t) +\gamma(t)=c_1$ for $t \ge t_0$.
As $\gamma(t)$ is monotonically increasing, it follows that $\tilde{\eta}(t)$ is monotonically decreasing.
Both $\tilde{\eta}(t), \gamma(t)$ are bounded below and above by zero and $c_1$, respectively. 
Hence, $\gamma(t)$ and $\tilde{\eta}(t)$ must converge. 
Based on these results we get $0\leq\tilde{\eta}(t)\rightarrow c_2 \leq c_1 < \infty $ for  $t\rightarrow\infty$. 
We know that $\gamma(t)$ converges, thus the occurring integrands must converge towards 0, i.e.,
$$ \left<\rho \bfdu_m(s), \bfdu_m(s)\right>\rightarrow 0,\quad \left<c^E \B \bfdu_m(s), \B \bfdu_m(s)\right> \rightarrow 0.$$
Through these results and by utilizing positive definiteness of $c^E$ and estimations similar to Eq.~\eqref{eq_161}
we also get the following convergence result
\begin{align}\label{upunkt0}
\bfdu_m(s)\rightarrow 0 \quad \B \bfdu_m(s) \rightarrow 0.
\end{align}

For the next steps it is already known that
$$ \tilde{\eta}(t):= \underbrace{\left<\rho \bfdu_m(t), \bfdu_m(t)\right>}_{\rightarrow 0} +\left<c^E \B \bfu_m(t), \B \bfu_m(t)\right> + \left<\epsilon^S \nabla \phi_m(t) ,\nabla \phi_m(t)\right>\rightarrow c_2 \ge 0.$$ 
We have to show that one of the other two summands converges and determine the limit values.

We get the convergence of $\left<c^E \B \bfu_m(t), \B \bfu_m(t)\right>$ by taking advantage of Eq.~\eqref{upunkt0} and using the characteristic of the time derivative of $\B \bfu_m(t)$. 
As all other summands converge this implies that also $\left<\epsilon^S \nabla \phi_m(t) ,\nabla \phi_m(t)\right>$ must converge.
In order to specify the limit values we test the weak form Eq.~\eqref{eq_schwache_form} first with $(\bfu_m(t),0)$
and get 
\begin{equation}
\begin{aligned}
&\underbrace{\left< \rho \bfddu_m(t), \bfu_m(t) \right>}_{\rightarrow 0} +\underbrace{\alpha \left< \rho \bfdu_m(t),\bfu_m(t) \right>}_{\rightarrow 0} +\left< c^E \calB \bfu_m(t),\calB \bfu_m(t) \right>   \\
+& \underbrace{\beta\left< c^E \calB \bfdu_m(t),\calB \bfu_m(t) \right>}_{\rightarrow 0}  + \left<  e^T\nabla\phi_m(t), \calB \bfu_m(t) \right>  
=  \underbrace{\left<f(t),\bfu_m(t) \right>}_{= 0}
\end{aligned}
\label{eq_konvergenz_schwache_form_u}
\end{equation}
to obtain
\begin{equation}
\begin{aligned}
\lim_{t \to \infty} \left<  e^T\nabla\phi_m(t), \calB \bfu_m(t) \right> = \lim_{t \to \infty}
-\left< c^E \calB \bfu_m(t),\calB \bfu_m(t) \right> \leqslant 0
\end{aligned}
\label{eq_konvergenz1}
\end{equation}
and then a second time with $(0,\phi_m(t))$
\begin{equation*}
\begin{aligned}
\left<  e \calB \bfu_m(t),\nabla \phi_m(t) \right> -  \left< \epsilon^S \nabla \phi_m(t),\nabla \phi_m(t) \right> =   \underbrace{\left<g(t),\phi_m(t) \right> }_{=0}
\end{aligned}
%\label{eq_konvergenz2}
\end{equation*}
yielding
\begin{equation}
\begin{aligned}
\lim_{t \to \infty}\left<  e \calB \bfu_m(t),\nabla \phi_m(t) \right> = \lim_{t \to \infty} \left< \epsilon^S \nabla \phi_m(t),\nabla \phi_m(t) \right> \geqslant 0.
\end{aligned}
\label{eq_konvergenz2}
\end{equation}
From Eq.~\eqref{eq_konvergenz1} and Eq.~\eqref{eq_konvergenz2} we get $$\lim_{t \to \infty}
\left< c^E \calB \bfu_m(t),\calB \bfu_m(t) \right> = \lim_{t \to \infty} \left< \epsilon^S \nabla \phi_m(t),\nabla \phi_m(t) \right>= 0.$$

Note that from Eq.~\eqref{lax_milgram} we are aware that $\|e\B \bfu_m(t)\|_{L^2(\Omega)}\rightarrow 0$ and using the requirement for $\phi^e(t)$ implies that also $\| \phi_m(t)\|_{H^1_0(\Omega)}\rightarrow 0.$\\
It is clear that $\tilde{\eta}(t) \rightarrow 0$ and with the characteristics of the material parameters
$$ \rho >0,\quad c^E \text{ and } \epsilon^S \text{ symmetric, positive definite,}$$
we conclude
 $$\Vert \dot{\mathbf{u}}_m(t)\Vert_{L^2(\Omega)}\rightarrow 0,\quad \Vert \mathcal{B}\mathbf{u}_m(t)\Vert_{L^2(\Omega)}\rightarrow 0,\quad \Vert \phi_m(t)\Vert_{H^1_0(\Omega)}\rightarrow 0$$
for $t\rightarrow \infty$.
Finally, we know that the derivatives in time and space of $\mathbf{u}_m(t)$ converge to zero, so $\Vert \mathbf{u}_m(t) \Vert_{L^2(\Omega)}\rightarrow \tilde{c}\in \mathbb{R}$.

Then we can conclude that $\eta(t) \rightarrow c\in \mathbb{R}^{+}$ for $t\rightarrow \infty$.
\end{proof}
As expected, we also find this theorized behavior in our numerical simulation results, see also Fig. \ref{fig_langzeit}. 
There, the monotonically decreasing energy term $\tilde{\eta}$ is shown.
The electrode on the top of the piezoceramic disk is excited by the potential pulse as shown in Fig.~\ref{fig_potentialpuls}. 
The time integration is given by a HHT-method, which is commonly used for piezoceramics (see \cite{kaltenbacher_buch}). 
These results were obtained by applying our simulation tool which will be focused on in upcoming publications.
Note that small inaccuracies can occur due to numerical reasons.

\begin{rem}
By using similar techniques as in the second part of the proof of Thm.~\ref{theorem1}, the last Corollary can be extended to non-discretized solutions of the partial differential equations.
\end{rem}

 \begin{figure}[!ht]
 \begin{center}
\includegraphics[width=0.5\textwidth]{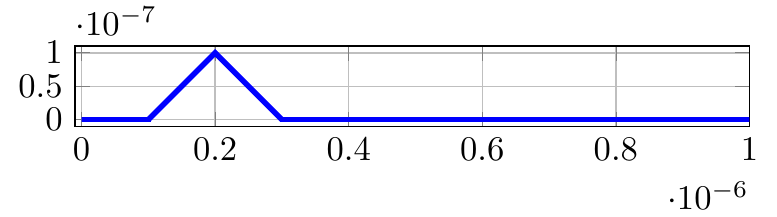}
\caption{Potential pulse for 2D transient simulation in FEniCS. }
\label{fig_potentialpuls}
\end{center}
\end{figure}

 \begin{figure}[!ht]
 \begin{center}
\includegraphics[width=0.9\textwidth]{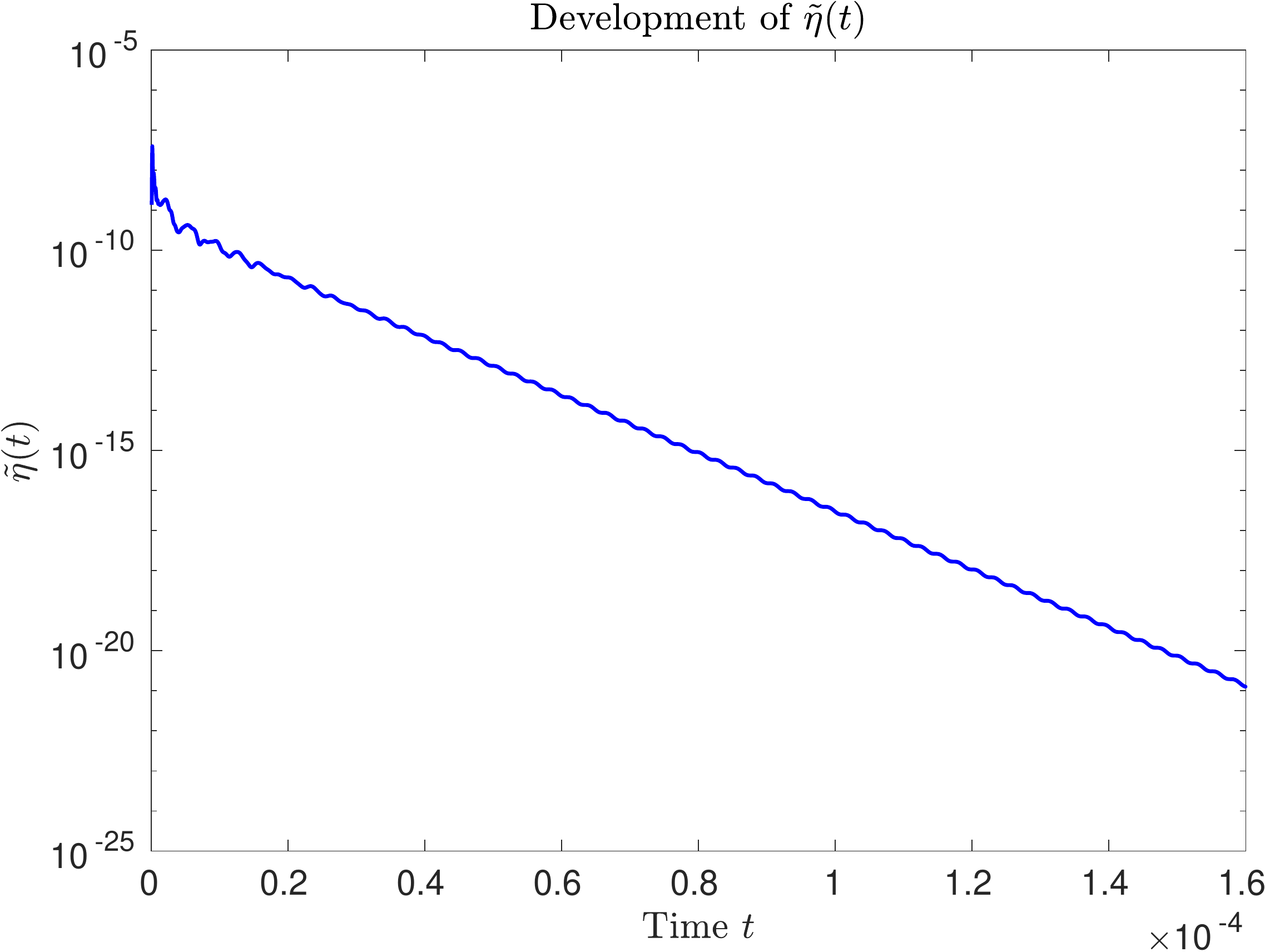}
\caption{Energy term $\tilde{\eta}(t)=\Vert \dot{\mathbf{u}}_m(t)\Vert_{L^2(\Omega)}+\Vert \mathcal{B}\mathbf{u}_m(t)\Vert_{L^2(\Omega)}+\Vert \phi_m(t)\Vert_{H^1_{0}(\Omega)}.$}
\label{fig_langzeit}
\end{center}
\end{figure}

\section{Conclusion}
Piezoelectric materials are widely diversified in their applications. 
Since measurements on real specimens are very expensive, computer simulations are used instead. 

However, in order to confidently use these computer simulations, the underlying damped partial differential equation must be analyzed.
In this paper, we prove existence, uniqueness and regularity of weak solutions of the governing partial differential equations and show some results on the long term behavior of solutions.

The obtained theoretical results are consistent with numerical results gained from a computational simulation of the model. With this, the basis is formed for ongoing design optimization of piezoelectric transducers.

\appendix
\section{Definitions}\label{appendix_definition}

Let $d,k\in \N$ be integers and let $\alpha$ be a multi-index.
Then we define the functional spaces
\begin{equation*}
\begin{aligned}
C^k(\Omega) &:=\Big\{\ \sigma: \Omega \rightarrow \R^d : \sigma \text{ is $k$-times continuously differentiable}  \Big\}\, \\
L^2(\Omega)&:=  \bigg\{\ \sigma: \Omega \rightarrow \R^d : \quad \|\sigma\|^2_{L^2(\Omega)}:=\int_{\Omega} \sigma^T\sigma \, d\Omega < \infty  \bigg\}\, \\
H^1(\Omega)&:=\Big\{\ \sigma: \Omega \rightarrow \R : \quad \|\sigma\|_{H^1(\Omega)}^2:=\|\sigma\|^2_{L^2(\Omega)} +  \|\nabla \sigma\|^2_{L^2(\Omega)} < \infty  \Big\}\, \\
H^1_0(\Omega)&:=\Big\{\ \sigma \in H^1(\Omega) : \sigma\big|_{\Gamma}=0 \text{ with }  \|\sigma\|_{H^1_0(\Omega)}:=\|\sigma\|_{H^1(\Omega)}    \Big \}\, \\
%H^1_{0,\Gamma}(\Omega)&:=  \Big\{\ \sigma_1+\sigma_2 : \sigma_1 \in H^1_0(\Omega) \text{ and } \sigma_2 \in H^1(\Omega) \Big\}\ ,\\
%H^1_\B(\Omega)&:=\Big\{\ \sigma: \Omega \rightarrow \R^3 : \quad \|\sigma\|_{H^1_\B(\Omega)}^2:=\|\sigma\|^2_{L^2(\Omega)}+\|\B \sigma\|^2_{L^2(\Omega)}<\infty \Big\}\ ,\\
%
H^{-1}(\Omega)&:=\bigg\{\ f \text{ continuous linear functional on $H^1_0(\Omega)$ : }\\
& \quad \quad \sup_{\|\sigma\|_{H^1_0(\Omega)} \leq 1} \left|\left<f,\sigma\right>\right|  <\infty \bigg\}\ .
\end{aligned}
\end{equation*}

\begin{equation*}
\begin{aligned}
\text{Let } \sigma : [0,T] \rightarrow X& \text{ be Bochner-measurable. Then} \\
L^2(0,T;X)&:=\bigg\{\ \sigma : [0,T] \rightarrow X :\quad  \int_{[0,T]} \|\sigma(t)\|_X^2 \, dt < \infty \bigg\}\, \\
L^\infty(0,T;X)&:=\Big\{\ \sigma : [0,T] \rightarrow X :\quad  \esssup_{0\leq t\leq T } \|\sigma(t)\|_X < \infty \Big\}\, \\
H^1(0,T;X)&:=\bigg\{\ \sigma : [0,T] \rightarrow X :\quad  \int_{[0,T]} \|\sigma(t)\|_X^2 + \|\dot{\sigma}(t)\|_X^2 \, dt < \infty \bigg\}\, \\
H^{2}(\Omega)&:=\Bigg\{\ \sigma: \Omega \rightarrow \R^3 : \\
& \quad \quad \|\sigma\|_{H^2(\Omega)}:= \left( \sum_{|\alpha|\leq 2} \|D^{(\alpha)}\sigma\|_{L^2(\Omega)}\right)^{1/2}  <\infty \Bigg\}\ .\\
\end{aligned}
\end{equation*}

\interlinepenalty10000 
\bibliographystyle{plain}
\bibliography{literature}

\begin{thebibliography}{10}

\bibitem{adams}
R.A. Adams and J.J.F. Fournier.
\newblock {\em Sobolev Spaces}.
\newblock Pure and Applied Mathematics. Elsevier Science, 2003.

\bibitem{japaner}
M.~Akamatsu and G.~Nakamura.
\newblock Well-posedness of initial-boundary value problems for piezoelectric
  equations.
\newblock {\em Applicable Analysis}, 81(1):129--141, 2002.

\bibitem{ding_trace}
Z.~Ding.
\newblock A proof of the trace theorem of sobolev spaces on {L}ipschitz
  domains.
\newblock {\em Proceedings of the American Mathematical Society},
  124(2):591--600, 1996.

\bibitem{evans}
L.C. Evans.
\newblock {\em Partial Differential Equations}, volume~19 of {\em Graduate
  studies in mathematics}.
\newblock American Mathematical Society, 1998.

\bibitem{feldmann}
N.~Feldmann, B.~Jurgelucks, L.~Claes, V.~Schulze, B.~Henning, and A.~Walther.
\newblock An inverse approach to the characterisation of material parameters of
  piezoelectric discs with triple-ring-electrodes.
\newblock {\em tm-Technisches Messen}, 86(2):59--65, 2019.

\bibitem{heywang}
W.~Heywang, K.~Lubitz, and W.~Wersing.
\newblock {\em Piezoelectricity: Evolution and Future of a Technology}.
\newblock Springer Series in Materials Science. Springer Berlin Heidelberg,
  2008.

\bibitem{jurgelucks}
B.~Jurgelucks.
\newblock {\em Increased sensitivity in parameter identification problems for
  piezoelectrics}.
\newblock PhD thesis, Paderborn University, 2019.

\bibitem{kaltenbacher_lahmer_mohr_pde_based}
B.~Kaltenbacher, T.~Lahmer, M.~Mohr, and M.~Kaltenbacher.
\newblock {PDE} based determination of piezoelectric material tensors.
\newblock {\em European Journal of Applied Mathematics}, 17(4):383–416, 2006.

\bibitem{kaltenbacher_buch}
M.~Kaltenbacher.
\newblock {\em Numerical Simulation of Mechatronic Sensors and Actuators:
  Finite Elements for Computational Multiphysics}.
\newblock Springer Berlin Heidelberg, 3rd edition, 2015.

\bibitem{lahmer}
T.~Lahmer.
\newblock {\em Forward and inverse problems in piezoelectricity}.
\newblock PhD thesis, University of Erlangen-Nuremberg, 2008.

\bibitem{larsson}
S.~Larsson and V.~Thom{\'e}e.
\newblock {\em Partial differential equations with numerical methods},
  volume~45.
\newblock Springer Science \& Business Media, 2008.

\bibitem{leugering}
G.~Leugering, A.A. Novotny, G.P. Menzala, and J.~Soko{\l}owski.
\newblock Shape sensitivity analysis of a quasi-electrostatic piezoelectric
  system in multilayered media.
\newblock {\em Mathematical Methods in the Applied Sciences},
  33(17):2118--2131, 2010.

\bibitem{Melnik}
V.N. Melnik.
\newblock Existence and uniqueness theorems of the generalized solution for a
  class of non-stationary problems of coupled electroelasticity. s. mathematics
  (iz. vuz), 35 (1991), 24-32, 1991.

\bibitem{NanthakumarLahmer}
S.S. Nanthakumar, T.~Lahmer, X.~Zhuang, H.S. Park, and T.~Rabczuk.
\newblock Topology optimization of piezoelectric nanostructures.
\newblock {\em Journal of the Mechanics and Physics of Solids}, 94:316 -- 335,
  2016.

\bibitem{schweizer}
B.~Schweizer.
\newblock {\em Partielle Differentialgleichungen: Eine anwendungsorientierte
  Einf{\"u}hrung}.
\newblock Springer-Verlag, 2013.

\bibitem{stein}
E.M. Stein.
\newblock {\em Singular integrals and differentiability properties of functions
  (PMS-30)}, volume~30.
\newblock Princeton university press, 2016.

\bibitem{wloka}
J.~Wloka.
\newblock {\em Partial Differential Equations}.
\newblock Cambridge University Press, 1987.

\end{thebibliography}

\end{document}